\newtheorem{theorem}{Theorem}
\newtheorem{conclusion}[theorem]{Conclusion}
\newtheorem{condition}[theorem]{Condition}
\newtheorem{corollary}[theorem]{Corollary}
\newtheorem{definition}[theorem]{Definition}
\newtheorem{lemma}[theorem]{Lemma}
\newtheorem{notation}[theorem]{Notation}
\newtheorem{proposition}[theorem]{Proposition}
\newtheorem{remark}[theorem]{Remark}
\newenvironment{proof}[1][Proof]{\noindent\textbf{#1.} }{\ \rule{0.5em}{0.5em}}
\begin{document}

\title{A new type of diagrams for modules}
\author{by Stephanos Gekas \\
%EndAName
Aristotle University of Thessaloniki, School of Mathematics}
\date{20 March 2016}
\maketitle

\begin{abstract}
We introduce a new type of diagrams and prove the existence of a particular
one, the "central tuned diagram", with some optimal features, for finitely
generated modules of certain categories. This is achieved by getting to the
idea of "the virtual category" of a module. Important applications are
specifically suggested to the modular representations of finite groups of
Lie type.
\end{abstract}

\section{\textbf{Introduction} \ }

\bigskip

We prove the existence of a "virtual diagram" for every module in certain
categories, for example that of $A$-modules of finite $k$-dimension over a
Frobenius $k$-algebra $A$.

This is done by proving the existence of a particular, well defined diagram,
to be called "the centrally tuned diagram", which is made so as to
illustrate the longest (in radical length) indecomposable summands ("longest
pillars") of radical sections (i.e., of sections of the type $Rad^{i}\left(
M\right) /Rad^{j}\left( M\right) $) as the connected components of the
relevant radical section of the diagram. The problem would not be so hard,
if there were no isomorphic idecomposable summands in radical sections, in
particular, isomorphic irreducibles in a radical layer: This is also why all
previous efforts to obtain some module diagrams, that we know of, have
essentially restricted themselves to this convenient special case.

To that purpose we introduce\ the notion of virtual radical (or, dually,
socle) series of $M$ (and, of course, of $A$ itself), in which the simple
constituents of the series are not just determined up to isomorphism but
elementwise (set-theoretically). The easiest way to conceive the idea is
through filtrations. Whenever we have a direct sum of isomorphic
indecomposables, a realization is an (elementwise) choice of an expression
of that sum; in the process of realization priority is given to the longest
pillars. The realization of the minor pillars (in radical subsections of the
radical sections of the major pillars) shall then be subjugated to the
realizations of the major ones; this is a key idea, which we refer to as
"vertical priority".\ \ 

After those coordinated ("tuned") "realizations", we appeal to the relevant
extension theory of $S$ by $N$ and its interpretation in terms of
homomorphisms in $Hom_{A}\left( \Omega ^{1}S,N\right) $ (for $A$ an artinian
ring or a Frobenius algebra), monitored by means of the second virtual
radical layer. In the same context we introduce the intuitively very
important notion of proportion class of an extension. That theory is briefly
reviewed with a fresh eye and some (may be) new insights in the next section.

All this can be much more neatly done in the frame of a new category we
define in section 3, "the virtual category of a module". That sheds a
completely new light and meaning in the radical (/socle) series, as well
enables us to get to "virtual diagrams" in the general case.

\bigskip Our thoughts with our module diagram extend to two directions: To
highlight, (a) the section structure / the possible filtrations, (b)
homomorphisms between modules, in particular automorphisms and
endomorphisms. The case of automorphisms is of a crucial importance, as any
homomorphism between modules may be viewed as the composition $\alpha \circ
h\circ \beta $, where $\alpha $, $\beta $ automorphisms ($\alpha $ of a
submodule-image, $\beta $ of a factor module-coimage) and $h$ a canonical
homomorphism, the elucidation of which is the same as that of submodules and
factor modules.

\bigskip By doing that, it shall be possible to get to complicated diagrams
from simpler subdiagrams, corresponding to their sections. In this respect,
the assertion on the existence of such a diagram and its properties shall be
of a crucial relevance.

\textit{We intend to investigate such possibilities and exemplify them
properly in future work. Among others, we wish to investigate, the
possibility of getting from relatively "nice" and simple diagrams for
injective modules of a reductive, simply connected semisimple algebraic
group }$G$\textit{\ over an algebraically closed field of positive
characteristic (in the defining characteristic) their restriction down to
the subgroup }$G\left( n\right) $\textit{\ of }$G$\textit{'s points over the
field with }$p^{n}$\textit{\ elements. This is discussed in the last
section. \ \ }

\begin{notation}
\bigskip For a ring $R$ we shall denote the $i$'th power Jacobson radical by 
$J^{i}\left( R\right) $, while for the i'th power Jacobson radical of a
(left, when nothing else is said) $R$-module $M$ we shall use both $%
J^{i}\left( R\right) M$ and $rad^{i}\left( M\right) $.
\end{notation}

\bigskip

\bigskip

\section{Ext, Hom traced on "virtual" radical (/socle) series}

\bigskip In this section, we approach the notion of "virtuality" in \textbf{%
radical }(resp. \textbf{socle}) \textbf{series} of a module
set-theoretically, where everything shall be understood as a submodule or
subsection of the given module, together with some implicit identification.

All this is more properly organized in the next section, by introducing the
concept of the "virtual category" , as the idea that naturally enables us
get a clear overview of the general module structure. In particular, we are
thus getting overview of the modules $Ext$, as we are already going to see
in this section for $Ext^{1}$, in a 1-1 correspondence to specific
set-theoretically specified (i.e., not just up to isomorphism) simple
modules on the second radical layer, in the case of p.i.m.'s (or dually the
second socle layer). That could previously somehow be done at most in case
that there were no isomorphic simple modules on that layer.

After taking the consequence of this specified insight by getting to the
idea of the virtual category, the next step is bound to be the idea of the
virtual diagram, which is to be the subject of the next section.

\begin{definition}
A \textbf{virtual radical }(resp. \textbf{socle}) \textbf{series} of a
module is one where the simple constituents on each layer are not just meant
as "isomorphic copies", but they have also been specified set-theoretically.
\end{definition}

\bigskip

Notice that the subsequent orders ("powers") of the radical, respectively of
the socle, are submodules, thus set-theoretically well defined (with respect
to the original module). However, when taking the quotient of radicals (%
\textit{resp.} socles) of subsequent orders, in order to form the layers of
the series, one defines them just up to isomorphism. We wish here to break
with this tradition: \textbf{To this end, we consider the constituents of
each layer set-theoretically as consisting of cosets of the next order
radical (resp., of the previous order socle).} This subtle differentiation
opens up for new insights and possibilities, as I am beginning to exhibit in
this paper.

This set-theoretical specification at all layers necessitates some \textbf{%
identifications of sets,} which in this section are just introduced
intuitively, but which shall finally lead us safely to the definition of the
virtual category in the next section.

\bigskip 

\begin{condition}
\bigskip By viewing the elements (of the irreducibles of) each radical layer
as the biggest possible (: down to the last layer) cosets, it becomes
possible to project them to the lesser cosets (by factoring out a
submodule), enabling us to IDENTIFY the subsections of (f.ex. radical)
radical sections to their canonically isomorphic sections, in particular
identify any isomorphic pairs of the type

$rad^{i}M/rad^{j}\left( M\right) $, $\left( rad^{i}M/rad^{j+\lambda
}M\right) \diagup \left( rad^{j}M/rad^{j+\lambda }M\right) $ $\left(
i<j\right) $ as well as their contents in a well-defined $1-1$
correspondence. 

We are taking this identification for given throughout this work; it is,
however, more proper to define a suitable category, where also these
identifications are fully put into their right context. That is being done
in the next section.
\end{condition}

\bigskip 

We note further that the "virtuality" of the contents of the radical layers
is also conceivable through suitable (: compatible, i.e. \textbf{realizable}%
, see below) filtrations, a point that is important to keep in mind. \textbf{%
Indeed, virtual (radical/socle) series (and the corresponding virtual
diagrams, which we are going to define) are, at least in case not all simple
copies on the same layer are non-isomorphic, the only kind of picture of a
module, that may properly enable us to follow and overview filtrations.}

\bigskip\ It proves also expedient to our goals to introduce a new notion of
a more confined inverse image, in cases of canonical epimorphisms from
direct sums.

In our new, "virtual category" we are going to identify two (plus one) kinds
of pairs of naturally isomorphic subsections of a given module $M$: 

a. Pairs "of type q" $\left\{ A/B,\ A/C\diagup B/C\right\} $ and

b. Pairs "of type s" $\left\{ \left( \tbigoplus\limits_{i=1}^{n}M_{i}\right)
\diagup \left( \tbigoplus\limits_{i=1}^{n}S_{i}\right) ,\
\tbigoplus\limits_{i=1}^{n}M_{i}/S_{i}\right\} $, to which we shall add all
pairs "of type S"\ $\left\{ \widetilde{\pi }^{-1}\left( N\right) ,\varpi
^{-1}\left( N\right) \right\} $, i.e. preimages of canonical epimorphisms
from direct sums and what we shall be calling "confined preimages",\ which
will mean following :

Let $M=\tbigoplus\limits_{i=1}^{n}M_{i}$ be a certain decomposition of the
section $M$ of a given module $K$\ as a direct sum of indecomposables with
each $M_{i}$ posessing a certain submodule $S_{i}$; consider the natural
isomorphism $\sigma :\left( \tbigoplus\limits_{i=1}^{n}M_{i}\right) \diagup
\left( \tbigoplus\limits_{i=1}^{n}S_{i}\right) \longrightarrow \
\tbigoplus\limits_{i=1}^{n}M_{i}/S_{i}$,\ and the canonical epimorphisms $%
\varpi :M\longrightarrow M\diagup \left(
\tbigoplus\limits_{i=1}^{n}S_{i}\right) $ and $\pi _{i}:M_{i}\longrightarrow
M_{i}/S_{i}$. Further, for any subset $J$=$\left\{ j_{1},...,j_{s}\right\} $
of $\left\{ 1,...,n\right\} $ define $\pi _{J}:=\left( \pi _{j_{1}},...,\pi
_{j_{s}}\right) :\tbigoplus\limits_{j\in J}^{{}}M_{j}\longrightarrow
\tbigoplus\limits_{j\in J}^{{}}M_{j}/S_{j}$ and let $p_{J}$\ be the usual
direct sum projection corresponding to the index subset $J$, $%
\tbigoplus\limits_{i=1}^{n}M_{i}/S_{i}\longrightarrow
\tbigoplus\limits_{j\in J}^{{}}M_{j}/S_{j}$.\ 

Assume, now, $N$ to be any submodule of $M\diagup \left(
\tbigoplus\limits_{i=1}^{n}S_{i}\right) $. Notice that, if we are \textit{%
"inside of a module }$K$\textit{"}, i.e. if $M$ is a section of a module $K$%
, then $M\diagup \left( \tbigoplus\limits_{i=1}^{n}S_{i}\right) $ and $%
\tbigoplus\limits_{i=1}^{n}M_{i}/S_{i}$ are being "virtually" (i.e., in the
virtual category $\hat{W}_{K}$ of $K$) identified (as pairs "of type s"). $\
\ \ \ \ \ \ \ \ \ \ \ \ \ \ \ \ \ \ \ $

\begin{definition}
\bigskip Let $J$\ be the subset of $\left\{ 1,...,n\right\} $\ which is
minimal with the property that $\sigma \left( N\right) $ be contained in $%
\tbigoplus\limits_{j\in J}^{{}}M_{j}/S_{j}$ (equivalently, that $%
res_{N}\left( \pi _{J}\circ \sigma \right) $\ is injective). We define "%
\textbf{the confined }$\varpi $\textbf{-preimage} $\widetilde{\varpi }%
^{-1}\left( N\right) $ of $N$" as, $\widetilde{\varpi }^{-1}\left( N\right)
:=\left( \pi _{J}^{-1}\circ p_{J}\circ \sigma \right) \left( N\right) $\ \ \
\ (being a submodule of $\tbigoplus\limits_{j\in J}^{{}}M_{j}/S_{j}$). 
\end{definition}

\bigskip In the following we shall just be using the notation $\widetilde{%
\varpi }^{-1}\left( N\right) $\ for it, without further notification
whenever the context is clear. Take care of the important fact, that this
preimage depends on the choice of a decomposition $M=\tbigoplus%
\limits_{i=1}^{n}M_{i}$ into indecomposables (but not really on the $\varpi -
$kernel $\tbigoplus\limits_{i=1}^{n}S_{i}$), or at least of the direct
summand $\tbigoplus\limits_{j\in J}^{{}}M_{j}$, unless all the
indecomposables $M_{i}$, $i=1,...,n$, are non-isomorphic, in which case the
definition remains unambiguous, with no need for further specification.\ \ 

\bigskip The direct sums that are relevant in our context here are just ones
that appear as sections of the module.

\bigskip 

\bigskip In order to get full advantage of virtual series, we must find an
optimal and coordinated way of choosing the direct summands on each layer in
cases of more than one isomorphic copies of irreducibles. This shall
eventually lead us to the natural idea of a virtual diagram depicting the
structure of the module. 

We begin here with some preparatory revision and reinforcement (in virtual
terms) of some very closely related facts.

In what follows we shall be considering a Frobenius $k$-algebra $A$ (for
example a finite-group algebra) and by "modules" we mean left $A$-modules
(always finite dimensional as $k$-vector spaces). Of course, we are thinking
of group algebras over fields of characteristic $p\neq 0$ dividing the order
of the group, since otherwise, according to Maschke's theorem, the group
algebra be semisimple - and hence uninteresting to us.

Let $V$ be an $A$-module and consider a virtual radical (and a virtual
socle) series of it.

\bigskip

\begin{proposition}
If $\phi $\ is a non-zero ($A$-)homomorphism $VJ\left( A\right)
\longrightarrow N$, \ then $\phi $ cannot factor through a projective
A-module.
\end{proposition}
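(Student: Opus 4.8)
The plan is to reduce everything to one structural feature of a Frobenius algebra — that the classes of projective and of injective $A$-modules coincide — together with the elementary identities $VJ(A)=rad(V)$ and $f(MJ(A))=f(M)J(A)$ for an $A$-linear $f$.

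Suppose, towards a contradiction, that $\phi$ factors as $VJ(A)\xrightarrow{f}P\xrightarrow{g}N$ with $P$ projective. Since $A$ is Frobenius, $P$ is also injective, so $f$ extends along the inclusion $VJ(A)\hookrightarrow V$ to an $A$-homomorphism $\tilde f\colon V\to P$ with $\tilde f|_{VJ(A)}=f$. The whole point of passing to $\tilde f$ is that it is defined on all of $V$, so that $f(VJ(A))=\tilde f(VJ(A))=\tilde f(V)\,J(A)\subseteq P\,J(A)=rad(P)$. Applying $g$ gives $\operatorname{im}\phi=g(f(VJ(A)))\subseteq g(rad(P))=g(P)\,J(A)\subseteq N\,J(A)=rad(N)$. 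Hence a homomorphism $VJ(A)\to N$ that factored through a projective would have image inside $rad(N)$; in the setting where the proposition is applied, $N$ lies on a radical (resp.\ socle) layer, hence is semisimple, so $rad(N)=0$ and we conclude $\phi=0$, against the hypothesis $\phi\neq 0$.

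I expect the extension step $f\mapsto\tilde f$ — i.e.\ the injectivity of the projective module $P$ — to carry all the weight; everything after it is just the bookkeeping $rad(M)=MJ(A)$ together with $A$-linearity. If one prefers to avoid injectivity there is a parallel argument: write $P=P_1\oplus\cdots\oplus P_r$ as a sum of indecomposable projectives, each with simple top since $A$ is artinian; for the component $f_i\colon VJ(A)\to P_i$, Nakayama's lemma forces either $\operatorname{im}f_i=P_i$ or $\operatorname{im}f_i\subseteq rad(P_i)$, and the first alternative is impossible, because a split epimorphism onto $P_i$ would exhibit the nonzero injective module $P_i$ as a direct summand of $V$ contained in $rad(V)$, whose top would then be zero in $top(V)$ — absurd; so $\operatorname{im}f\subseteq rad(P)$ and one finishes as before. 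The one point calling for care is the status of $N$: the argument literally delivers $\operatorname{im}\phi\subseteq rad(N)$, and the bare statement that $\phi$ cannot factor through a projective is its specialisation to $rad(N)=0$, which is exactly the case — $N$ on a radical or socle layer — occurring in the applications.
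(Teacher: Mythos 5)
Your argument is correct and rests on the same structural fact as the paper's first proof, namely the injectivity of $P$ over a Frobenius algebra. You streamline it: where the paper first embeds $V$ into its injective hull $Q$ and then extends $a\colon VJ(A)\to P$ along $VJ(A)\hookrightarrow Q$, you extend directly along $VJ(A)\hookrightarrow V$, which is all that is needed, and you close by noting $\operatorname{im}\phi\subseteq\operatorname{rad}(N)$. That last formulation is a genuine small improvement: it isolates exactly where the hypothesis on $N$ enters, and you are right that the Proposition as printed never actually says $N$ is simple even though the paper's proof (the line ``Since $N$ is simple\dots'') uses it; your version makes explicit that semisimplicity of $N$ is what forces $\phi=0$. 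Your Nakayama-based alternative is closer in spirit to the paper's second proof (which pushes $\phi$ through the projective cover $P_N$ and derives a contradiction with the head of $V$), but it is shorter because you stop as soon as the putative split epimorphism onto an indecomposable $P_i$ puts an injective summand inside $\operatorname{rad}(V)$, whereas the paper continues through a chain of splittings of $\zeta\colon P_N\rightarrowtail V$; note, though, that this ``alternative'' still invokes injectivity of $P_i$ at the decisive step, so it is not, as you frame it, a way to avoid the Frobenius/injectivity input.
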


\begin{proof}
\textit{First proof:}

By isolating the relevant summand, we may reduce to the case that $V$ is
indecomposable.

Let, so, $\phi \ \func{factor}\ $through $P$: $\phi =ba$, where $%
b:P\longrightarrow N$\textit{.}

We are first proving that then, $\phi $\ does also factor through \ $V$'s
injective hull, say $Q$. Consider $V$ embedded as a submodule of $Q$;\ then,
apparently, $J\left( A\right) V\subseteq J\left( A\right) Q$.

$%
\begin{array}{ccc}
Q & \psi -\rightarrow & P \\ 
\iota \uparrow & a\nearrow & \downarrow b \\ 
VJ\left( A\right) & _{\phi }\twoheadrightarrow & N \\ 
\uparrow &  &  \\ 
0 &  & 
\end{array}%
$ \ \ \ Due to $P$ 's injectivity we get $\psi :Q\longrightarrow P$, such
that $\psi \iota =a$, hence $\left( b\psi \right) \iota =$\ $\phi $.

Since $N$ is simple, $b\psi :Q\longrightarrow N$ suggests $QJ\left( A\right)
\subseteq \ker b\psi $, hence also $VJ\left( A\right) \subseteq \ker b\psi $%
, which necessarily means $\phi =0$, a contradiction.

\textit{Second proof: }

$\phi $ induces an isomorphism $N_{1}\longrightarrow N$ for some simple
submodule $N_{1}$ of $J\left( A\right) V/J^{2}\left( A\right) V$. In case $%
\phi $\ factors through a projective $P$, say by $\iota :J\left( A\right)
V\longrightarrow P$, that induced isomorphism $N_{1}\widetilde{%
\longrightarrow }N$ necessarily factors through the one induced from $\func{%
Im}\left( \iota \right) $'s head. But since b induces an epimorphism from $P$%
's head onto $N$, it becomes clear that $\func{Im}\left( \iota \right) $'s
head lies in $P$'s head. Hence $P$'s head contains an isomorphic copy of $N$.

We are first proving that $\phi $\ does also factor through \ $N$'s
projective cover, say $P_{N}$.

$%
\begin{array}{ccc}
P & --\rightarrow & P_{N} \\ 
\iota \uparrow & b\searrow & \downarrow \\ 
VJ\left( A\right) & _{\phi }\twoheadrightarrow & N \\ 
&  & \downarrow \\ 
&  & 0%
\end{array}%
$ \ In case $P$ is not isomorphic to the projective cover of $N$, then its
head, as noticed containing an isomorphic copy of $N$, must also contain
more irreducibles than that (see f.ex. \cite[6.25(i), p. 133]{CR}). On the
other hand, by $P$'s projectivity we get a homomorphism $P\longrightarrow
P_{N}$, which is surjective, since it induces an epimorphism onto $P_{N}$'s
simple head $N$, and which must then split, due to $P_{N}$'s projectivity: $%
P=P_{0}\oplus P_{N}^{^{\prime }}$, with $P_{N}^{^{\prime }}\cong P_{N}$ and $%
P_{0}=\ker \left( \ P\longrightarrow P_{N}\right) $, another projective, as
a direct summand of a projective. For the same reason the composite $\sigma
: $ $J\left( A\right) V\longrightarrow P_{N}$ is an epimorphism (as inducing
an isomorphism $N_{1}\widetilde{\longrightarrow }N$ onto the head of $P_{N}$%
), thus split, yielding\ $VJ\left( A\right) =\ker \sigma \oplus M$, with $%
M\cong P_{N}$, thus giving a composite monomorphism $\zeta
:P_{N}\rightarrowtail J\left( A\right) V\rightarrowtail V$, which has to
split, as $P_{N}$ is injective too, since $A$ has been assumed to be
(symmetric, hence also) (quasi-)Frobenius. But this does also imply that the 
$Hd\left( \zeta \left( P_{N}\right) \right) =N_{1}$ (notice also here the "$%
= $" instead of "$\cong $"!) is a direct summand in $Hd\left( V\right) $,
contrary to the fact that $N_{1}$ is actually in the second layer of the
virtual radical series of $V$.
\end{proof}

\begin{remark}
\bigskip We wish here to emphasize that, by saying " $\phi :M\longrightarrow
N$ factors through a projective" is meant that it may be written as a
composition with some $\beta :P\longrightarrow N$, i.e. from $P$ to $N$!
\end{remark}

\bigskip

\bigskip We remind the reader that the Heller operators $\Omega ^{n}V=\ker
\left( \partial _{n-1}:P_{n-1}\longrightarrow P_{n-2}\right) $ (for $n>0$
and a minimal projective resolution of $V$ - dually for $n<0$) may only be
defined in a category of finitely generated $A$-modules, where the
Krull-Schmidt theorem holds and hence a unique minimal projective resolution
is available; to this end, it is actually sufficient to assume that the ring 
$A$ is (left-) artinian (see \cite{CR} for a proof), an assumption that we
are keeping to here. Otherwise these operators are in general only definable
in the stable category, i.e. up to a projective direct summand.

\begin{lemma}
\bigskip If $\phi $\ is a non-zero ($A$-)homomorphism $\Omega
^{n}V\longrightarrow N$ ($A$ a Frobenius $k$-algebra, $N$ simple, $n>0$), \
then $\phi $ cannot factor through a projective $A$-module.
\end{lemma}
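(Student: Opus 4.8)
The plan is to reduce the statement to the case $n=1$, which is exactly the Proposition just proved. The key structural fact is that, for a Frobenius (hence self-injective) algebra $A$, the Heller operator $\Omega$ is an equivalence on the stable module category, and moreover $\Omega^{n}V$ is itself (the non-projective part of) a submodule of a projective, namely $\Omega^{n}V = \ker(\partial_{n-1}:P_{n-1}\to P_{n-2}) \subseteq P_{n-1}$, sitting inside the radical of $P_{n-1}$ whenever $n\geq 1$ and the resolution is minimal. So the first step is to observe that, by the minimality of the projective resolution, $\Omega^{n}V \subseteq J(A)P_{n-1} = \mathrm{rad}(P_{n-1})$, which puts us in a situation formally identical to the Proposition, with $P_{n-1}$ playing the role of the module $V$ there and $\Omega^{n}V$ playing the role of $VJ(A)$.

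Concretely, I would argue as follows. Suppose $\phi:\Omega^{n}V\to N$ is non-zero and factors as $\phi = b\circ a$ with $a:\Omega^{n}V\to P$, $b:P\to N$, $P$ projective. Since $A$ is Frobenius, $P$ is also injective, so the inclusion $\iota:\Omega^{n}V\hookrightarrow P_{n-1}$ together with $a$ yields, by injectivity of $P$, a map $\psi:P_{n-1}\to P$ with $\psi\iota = a$; hence $\phi = (b\psi)\circ\iota$, i.e. $\phi$ extends to a homomorphism $b\psi:P_{n-1}\to N$. Because $N$ is simple, $b\psi$ annihilates $\mathrm{rad}(P_{n-1}) = J(A)P_{n-1}$, and since $\Omega^{n}V\subseteq J(A)P_{n-1}$ (minimality of the resolution), we get $\phi = (b\psi)\iota = 0$, a contradiction. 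This is just the first proof of the Proposition, transplanted verbatim with $P_{n-1}$ in place of the injective hull $Q$; the only extra input needed is the containment $\Omega^{n}V\subseteq \mathrm{rad}(P_{n-1})$.

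Alternatively, and perhaps more in the spirit of the second proof above, one can induct on $n$ using the exact sequences $0\to\Omega^{n}V\to P_{n-1}\to\Omega^{n-1}V\to 0$. If $\phi:\Omega^{n}V\to N$ factored through a projective, one would, via the dimension-shifting isomorphism $\underline{\mathrm{Hom}}(\Omega^{n}V,N)\cong\underline{\mathrm{Hom}}(\Omega^{n-1}V,\Omega^{-1}N)$ in the stable category (valid since $A$ is Frobenius), transport the hypothetical factorization down to a map out of $\Omega^{n-1}V$; but a map that factors through a projective is zero in $\underline{\mathrm{Hom}}$, so $\phi$ would already be zero there, and one then checks, exactly as in the base case $n=1$ handled by the Proposition, that a non-zero honest homomorphism from $\Omega^{n}V$ to a \emph{simple} module cannot be stably trivial — because $\Omega^{n}V$ has no projective (hence no injective, by Frobenius) direct summand when the resolution is minimal, so any map out of it that is stably zero is genuinely zero on the relevant simple-top constituent, contradicting non-vanishing of $\phi$.

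The main obstacle, as in the Proposition, is bookkeeping the minimality of the projective resolution correctly: one must be sure that $\Omega^{n}V$ really lies in $\mathrm{rad}(P_{n-1})$ and has no projective summand, which is where the hypothesis "$A$ Frobenius" (not merely artinian) and "minimal resolution" are genuinely used — a non-minimal resolution would introduce projective summands of $\Omega^{n}V$ and the statement would fail. Everything else is a direct reduction to the already-proved $n=1$ case, so I would present the first-proof-style argument as the main line and remark that it is literally the Proposition applied to $P_{n-1}$.
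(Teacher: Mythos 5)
Your main argument is identical to the paper's first proof: extend $a$ through the injective projective $P$ to a map $\psi:P_{n-1}\to P$ with $\psi\iota=a$, observe that $b\psi$ annihilates $J(A)P_{n-1}$ since $N$ is simple, and invoke $\Omega^{n}V\subseteq J(A)P_{n-1}$ from minimality of the resolution to conclude $\phi=b\psi\iota=0$, a contradiction. Your sketched stable-category alternative is circular at its final step (``a non-zero map from $\Omega^{n}V$ to a simple cannot be stably trivial'' is precisely the statement being proved), but since you explicitly designate the first argument as your main line, the proposal is correct and matches the paper.
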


\begin{proof}
\bigskip By taking a minimal projective resolution of V, $...\longrightarrow
P_{1}\longrightarrow P_{0}\longrightarrow V\longrightarrow 0$ , $\Omega
^{n}V $ is isomorphic to $\ker \left( \partial _{n-1}:P_{n-1}\longrightarrow
P_{n-2}\right) $, embedded in $P_{n-1}$ through, say, $\iota $. Appealing to
the Frobenius property, $P$\ is injective too, which forces $a$ to factor
through $\psi :P_{n-1}\longrightarrow P$. We may then again proceed in two
ways:

\textit{First proof:}\ 

$%
\begin{array}{ccc}
P_{n-1} & \psi -\rightarrow & P \\ 
\iota \uparrow & a\nearrow & \downarrow b \\ 
\Omega ^{n}V & _{\phi }\twoheadrightarrow & N \\ 
\uparrow &  &  \\ 
0 &  & 
\end{array}%
$ \ Due to the minimality of the resolution, $\Omega ^{n}V$ $\subseteq
J\left( A\right) P_{n-1}$; on the other hand, since $N=\func{Im}\left( b\psi
\right) $ is simple, we get $J\left( A\right) P_{n-1}\subseteq $ $\ker
\left( b\psi \right) $, therefore also is $b\psi \iota $ (meaning here, the
restriction of $b\psi $ to $\Omega ^{n}V$) equal to 0, hence $\phi =0$,
contrary to our assumption.

\ \textit{Second proof: }

By arguing in the same way as in the second proof of the proposition above,
we get by assuming that $\phi $\ factors through a projective $P$, that $%
\Omega ^{n}V$ has a projective direct summand isomorphic to the projective
cover of $N$, which contradicts the minimality of our resolution.
\end{proof}

\bigskip

By dualizing (either by starting with a minimal injective resolution and
mirror the above proof or by using the isomorphism $\Omega ^{-n}V\cong
\Omega ^{n}\left( V^{\ast }\right) ^{\ast }$ and the lemma), we get the
following

\begin{corollary}
\bigskip If $\phi $\ is a non-zero ($A$-)homomorphism $N\longrightarrow
\Omega ^{-n}V$ ($A$ left artinian, $N$ simple, n\TEXTsymbol{>}0), then $\phi 
$ cannot factor through a projective $A$-module.
\end{corollary}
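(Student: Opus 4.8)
The plan is to dualize the Lemma (the statement about $\Omega^{n}V \longrightarrow N$) rather than re-prove it from scratch, using whichever of the two suggested routes is cleaner. Both routes hinge on the fact that for a left artinian ring $A$ the $k$-dual $(-)^{\ast} = \mathrm{Hom}_k(-,k)$ interchanges left and right modules, sends projectives to injectives and vice versa, reverses arrows, and preserves simplicity; moreover $\Omega^{-n}V \cong \Omega^{n}(V^{\ast})^{\ast}$ as noted in the excerpt, and a map factors through a projective if and only if its dual factors through an injective.

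First I would make precise the duality dictionary: given a nonzero $\phi : N \longrightarrow \Omega^{-n}V$ with $N$ simple, apply $(-)^{\ast}$ to obtain a nonzero map $\phi^{\ast} : (\Omega^{-n}V)^{\ast} \longrightarrow N^{\ast}$ of right $A$-modules, with $N^{\ast}$ simple. Using $(\Omega^{-n}V)^{\ast} \cong \Omega^{n}(V^{\ast})$ (the dual of the displayed isomorphism, or directly: dualizing a minimal injective coresolution of $V$ yields a minimal projective resolution of $V^{\ast}$), this is a nonzero map $\Omega^{n}(V^{\ast}) \longrightarrow N^{\ast}$. Now suppose, for contradiction, that $\phi$ factors through a projective $A$-module $P$, say $\phi = b \circ a$ with $a : N \longrightarrow P$ and $b : P \longrightarrow \Omega^{-n}V$. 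Then $\phi^{\ast} = a^{\ast} \circ b^{\ast}$ factors through $P^{\ast}$, which is an injective $A$-module. So I have reduced the problem to: a nonzero map $\Omega^{n}(V^{\ast}) \longrightarrow N^{\ast}$ cannot factor through an injective.

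To close the loop I would dualize once more internally, or observe that the Lemma already forbids factoring through a projective; so I need the injective-version of the Lemma. That is obtained by applying $(-)^{\ast}$ to the Lemma's statement itself: if some $\psi : \Omega^{n}W \longrightarrow S$ ($S$ simple) factored through an injective $Q$, then $\psi^{\ast} : S^{\ast} \longrightarrow (\Omega^{n}W)^{\ast} \cong \Omega^{-n}(W^{\ast})$ would factor through the projective $Q^{\ast}$ — but that is exactly a nonzero map from a simple into a negative Heller translate factoring through a projective, i.e. precisely the situation this Corollary asserts is impossible, so to avoid circularity I should instead prove the injective-version directly by mirroring the Lemma's first proof: take a minimal injective coresolution $0 \longrightarrow W \longrightarrow Q^{0} \longrightarrow Q^{1} \longrightarrow \cdots$, realize $\Omega^{n}W$ as a suitable cokernel mapping into the socle structure, use that $\Omega^{n}W$ has no injective (equivalently, by Frobenius, no projective) summand because the coresolution is minimal, and derive the contradiction from simplicity of $S^{\ast}$ exactly as in the Lemma. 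Then unwinding the duals gives the Corollary.

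The main obstacle is bookkeeping rather than conceptual: one must be careful that $(-)^{\ast}$ genuinely carries a \emph{minimal} projective resolution of $V^{\ast}$ to a \emph{minimal} injective coresolution of $V$ (so that the Heller shift is the honest one and not merely correct up to projective summands), and that the hypothesis "$A$ left artinian" suffices here — the Lemma needed "$A$ a Frobenius $k$-algebra", and the dualization step is exactly where the weaker artinian hypothesis becomes adequate, since $\Omega^{-n}$ and the injective coresolution are available over any left artinian ring where Krull–Schmidt holds. I would state this compatibility as a one-line remark citing \cite{CR} for the existence and uniqueness of minimal (co)resolutions, and then the contradiction is immediate. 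Hence $\phi$ cannot factor through a projective. $\ \rule{0.5em}{0.5em}$
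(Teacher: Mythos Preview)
Your proposal is in the same spirit as the paper's one-line proof (``dualize, either by mirroring with a minimal injective coresolution or by using $\Omega^{-n}V\cong\Omega^{n}(V^{\ast})^{\ast}$ and the Lemma''), but you have made the argument considerably more circuitous than necessary, and there is a slip in your fallback description. When you write ``take a minimal injective coresolution $0\to W\to Q^{0}\to Q^{1}\to\cdots$, realize $\Omega^{n}W$ as a suitable cokernel'', note that an injective coresolution produces the \emph{negative} Heller translates $\Omega^{-n}W$, not $\Omega^{n}W$; so the mirrored proof should be run directly on the original map $\phi:N\to\Omega^{-n}V$, not on the dualized one. Concretely: with $\Omega^{-n}V=Q^{n-1}/\Omega^{-(n-1)}V$ and $\phi=b\circ a$ through a projective $P$, projectivity of $P$ lifts $b$ along $Q^{n-1}\twoheadrightarrow\Omega^{-n}V$ to $\tilde b:P\to Q^{n-1}$; since $N$ is simple, the image of $\tilde b\circ a$ lies in $\mathrm{soc}\,Q^{n-1}$, which by minimality is contained in $\Omega^{-(n-1)}V=\ker(Q^{n-1}\to\Omega^{-n}V)$, forcing $\phi=0$. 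This is the exact dual of the Lemma's first proof and needs only that $A$ be left artinian, which is why the Corollary can relax the Frobenius hypothesis.

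Your route through $(-)^{\ast}$ also works but you overcomplicate it. Once you have $\phi^{\ast}:\Omega^{n}(V^{\ast})\to N^{\ast}$ factoring through $P^{\ast}$, observe that the Lemma's first proof only ever used that the intermediate module is \emph{injective} (that is what allowed extending $a$ along $\Omega^{n}V\hookrightarrow P_{n-1}$); the Frobenius assumption was invoked solely to pass from ``projective'' to ``injective''. Since $P^{\ast}$ is injective, the Lemma's argument applies verbatim to $\phi^{\ast}$ with no circularity and no separate ``injective version'' needed. Alternatively, under the paper's ambient Frobenius hypothesis $P^{\ast}$ is again projective, and the Lemma applies literally. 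Either way the detour you flag is avoidable.
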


\bigskip Let now $A$ be a Frobenius algebra, in which case the category $_{A}%
\func{mod}$ of (left) $A$-modules is a Frobenius category.

Furthermore we are now for a moment shifting our contemplation from the
category $_{A}\func{mod}$ into the (triangulated) stable category $_{A}st%
\func{mod}$, which still has the same objects as $_{A}\func{mod}$ (although
we are now getting much bigger/fewer isomorphism classes of them), but whose
morphisms \underline{$Hom$}$_{A}\left( ,\right) $ are obtained as
equivalence classes in $Hom_{A}\left( ,\right) $ modulo the "ideal" (in the
sense of an additive category) $PHom(,)$, consisting of those homomorphisms,
that factor through a projective module; i.e., given two $A$-modules $M$, $N$%
, \underline{$Hom$}$_{A}\left( M,N\right) =Hom_{A}(M,N)/PHom_{A}(M,N)$. This
is a triangulated category with translation functor $T=\Omega ^{-1}$. A very
important point in considering the stable category is the fact that 
\begin{equation}
Ext_{A}^{n}\left( M,N\right) \cong \ \underline{Hom}_{A}\left( \Omega
^{n}M,N\right) \cong \ \underline{Hom}_{A}\left( M,\Omega ^{-n}N\right)
\end{equation}

for any $n\in 
%TCIMACRO{\U{2124} }%
%BeginExpansion
\mathbb{Z}
%EndExpansion
$ (See f.ex. \cite[5.1, also 5.2 \& 4.4(v)]{JC}).

Up to this point it would be enough to have a Frobenius category, such as
the category $_{A}\func{mod}$ of modules over a Frobenius algebra $A$. If we
now specialize in the case, where $A$ is a group algebra $kG$ over a finite
group $G$, which also is our main motivation, then we have an algebra that
is not only artinian and Frobenius, but also symmetric. For a symmetric $k$%
-algebra $A$ the p.i.m.'s (principal indecomposable modules, i.e. the
projective covers [being here the same as the injective hulls] of its simple
modules) have isomorphic head and socle (see f.ex. \cite[I 7.5(iii)]{PL} );
for a more general category-theoretic set-up and approach to the stable
category and related topics see f.ex. \cite{HK}, for more specific details
as to the triangulization of the stable category of modules over a
finite-group algebra see \cite{JC}.

\bigskip \bigskip Our Lemma with its Corollary above, together with relation
(1), imply the following

\begin{proposition}
Given a Frobenius algebra $A$, the $A$-modules $M$, $N$, where $N$ is
simple, for $n>0$ we have $Ext_{A}^{n}\left( M,N\right) \cong Hom_{A}\left(
\Omega ^{n}M,N\right) $ and $Ext_{A}^{n}\left( N,M\right) \cong
Hom_{A}\left( N,\Omega ^{-n}M\right) $.
\end{proposition}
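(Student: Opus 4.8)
The plan is to combine relation~(1) with the Lemma and its Corollary, observing that the only content to be added is that the relevant \underline{Hom}-groups coincide with the ordinary Hom-groups under the stated simplicity hypothesis. Concretely, for the first isomorphism I would start from $Ext_{A}^{n}(M,N)\cong\underline{Hom}_{A}(\Omega^{n}M,N)$, which is the first half of~(1), valid since $A$ is Frobenius and hence $_{A}\func{mod}$ is a Frobenius category. It then suffices to show $\underline{Hom}_{A}(\Omega^{n}M,N)=Hom_{A}(\Omega^{n}M,N)$, i.e. that $PHom_{A}(\Omega^{n}M,N)=0$. That is exactly what the Lemma delivers: any nonzero homomorphism $\Omega^{n}V\longrightarrow N$ with $N$ simple and $n>0$ cannot factor through a projective, so the ideal $PHom_{A}(\Omega^{n}M,N)$ consists only of the zero map, and the quotient $Hom_{A}(\Omega^{n}M,N)/PHom_{A}(\Omega^{n}M,N)$ equals $Hom_{A}(\Omega^{n}M,N)$.

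For the second isomorphism I would argue symmetrically. Start from $Ext_{A}^{n}(N,M)\cong\underline{Hom}_{A}(N,\Omega^{-n}M)$, again from~(1). Now I need $PHom_{A}(N,\Omega^{-n}M)=0$, which is precisely the content of the Corollary: a nonzero homomorphism $N\longrightarrow\Omega^{-n}V$ with $N$ simple and $n>0$ cannot factor through a projective. (Here one only needs $A$ left artinian for the Corollary to apply, which is certainly the case for a Frobenius algebra.) Hence $\underline{Hom}_{A}(N,\Omega^{-n}M)=Hom_{A}(N,\Omega^{-n}M)$, and composing the two identifications yields $Ext_{A}^{n}(N,M)\cong Hom_{A}(N,\Omega^{-n}M)$.

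I would finish by noting that the maps realizing these isomorphisms are the natural ones, so no naturality check beyond~(1) is required. The only mild subtlety — and the closest thing to an obstacle — is making sure the hypotheses line up: the Lemma is stated for $\Omega^{n}V$ with $V$ an arbitrary module, so applying it with $V=M$ is immediate; and one should remark that $PHom$ being an ideal means the vanishing of a Hom-group already forces the corresponding stable Hom-group to be the full Hom-group, with no further bookkeeping. Since these are all quotations of results proved above together with the displayed relation~(1), the proof is short; I would present it as a two-line deduction for each of the two isomorphisms.
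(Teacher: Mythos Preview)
Your proposal is correct and follows exactly the paper's own argument: the paper's proof is the one-line observation that the Lemma and its Corollary give $PHom_{A}(\Omega^{n}M,N)=0$ and $PHom_{A}(N,\Omega^{-n}M)=0$, which together with relation~(1) yield the two isomorphisms. Your write-up simply unpacks this in more detail.
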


\begin{proof}
Our Lemma, resp. its Corollary, gives $PHom_{A}(\Omega ^{n}M,N)=0$ and,
respectively, $PHom_{A}\left( N,\Omega ^{-n}M\right) =0$\ - which, by virtue
of the two relations (1), yield the results.
\end{proof}

\begin{remark}
\bigskip We note that, by following upon the line of the proof of the above
Lemma 5, we can show the proposition indepedently of the relations (1), for
any artinian algebra; see for example \cite[2.5.4]{DB}.
\end{remark}

We are going to pursue this correspondence further, by following it along
the virtual series in detail: \textit{a possibility which is only there, if
we are considering the virtual instead of the usual series.}

\bigskip

\bigskip

\bigskip

\begin{definition}
\bigskip We call two extensions $0\longrightarrow N_{\kappa }\longrightarrow
B_{\kappa }\longrightarrow S\longrightarrow 0$, $\kappa =1,2$,
"proportional" if there are isomorphisms $\sigma :N_{1}\longrightarrow N_{2}$
and $B_{1}\longrightarrow B_{2}$, such that the following diagram commute: \ 
$%
\begin{array}{ccccccccc}
0 & \rightarrow & N_{1} & \longrightarrow & B_{1} & \longrightarrow & S & 
\rightarrow & 0 \\ 
&  & \downarrow \sigma &  & \downarrow &  & \parallel &  &  \\ 
0 & \rightarrow & N_{2} & \longrightarrow & B_{2} & \longrightarrow & S & 
\rightarrow & 0%
\end{array}%
$ We shall call $\sigma $\ their "proportion"; in case $N$ is simple, $%
\sigma $\ corresponds to a non-zero element of the field $k$, as then $%
Aut_{A}\left( N\right) \cong k^{\ast }$ (Schur's Lemma). \ \ \ \ \ \ \ \ \ \
\ \ \ \ \ \ \ \ \ \ \ \ \ \ \ \ \ \ \ \ \ \ \ \ \ \ \ \ \ \ \ \ \ \ \ \ \ \
\ \ \ \ \ \ \ \ \ \ \ \ \ \ \ \ \ \ \ \ \ \ \ \ \ \ \ \ \ \ \ \ \ \ \ \ \ \
\ \ \ 
\end{definition}

\bigskip Notice that in this definition we did not assume $N_{\kappa }\
\left( \kappa =1,2\right) \ $and $S$ to be simple; however we are primarily
going to apply the concept in that case. This defines an equivalence
relation on the set of extensions of $S$ by $N$, the classes of which we
shall call "proportionaliy classes".

\begin{notation}
\bigskip\ Let now $V$ be an $A$-module, with simple head S (forcing $V$ to
be indecomposable) and let its second layer of virtual radical series be $%
VJ\left( A\right) /VJ^{2}\left( A\right)
=\tbigoplus\limits_{i=1}^{r}N_{i}\oplus W$ ($J\left( A\right) $ denotes the
Jacobson radical of $A$), where $N_{1}\cong N_{2}\cong ...\cong N_{r}\cong N$
simple $A$-modules and $W$ is a sum of irreducibles, all non-isomorphic to $%
N $. Notice that we have put \textbf{"=" and not "}$\cong $\textbf{":} that
is due to our set-theoretically \textit{virtual} approach.\ For $\kappa
=1,...,r $ we get a factor module $B_{\kappa }=%
\begin{array}{c}
S \\ 
N_{\kappa }%
\end{array}%
$, which we may specify more precisely as follows: Let $VJ\left( A\right)
/VJ^{2}\left( A\right) =N_{\kappa }\oplus R_{\kappa }$ and let $\pi
:VJ\left( A\right) \twoheadrightarrow VJ\left( A\right) /VJ^{2}\left(
A\right) $ be the natural epimorphism. Then $B_{\kappa }:=V/\pi ^{-1}\left(
R_{\kappa }\right) $.
\end{notation}

\bigskip As it is well-known, the elements of $Ext_{A}^{n}\left( M,N\right) $
are in 1-1 correspondence with the set of equivalence classes of
n-extensions of $M$ by $N$ (see \cite[Th. 6.3, p. 29]{JC}). Due to the
isomorphism \ \ above, we do also get an 1-1 correspondence between $%
\underline{Hom}_{A}\left( \Omega ^{n}M,N\right) $ and the set of equivalence
classes of $n$-extensions of $M$ by $N$; that correspondence is in terms of $%
\underline{Hom}_{A}\left( \Omega ^{n}M,N\right) $ precisely specified in 
\cite[2.6, p. 40]{DB}. We shall mainly be concerned with this correspondence
for $n=1$ and for modules over a symmetric algebra, in which case we also
have Proposition 7.

Adjusting to the standard notation, call $\zeta _{\kappa }$ the element of $%
Ext_{A}^{1}\left( M,N\right) $,\ which corresponds to the canonical
surjection $\widehat{\zeta }_{\kappa }:\Omega ^{1}\left( S\right) $ $%
\twoheadrightarrow N_{\kappa }$, and let $L_{\kappa }$ designate its kernel.

Let us now consider the standard minimal projective resolution of $S$, where
we will denote the projective cover os $S$ as $P_{0}$, with the whole
resolution taken in a set-theoretically virtual way, thus enabling us f.ex.
to consider $P_{0}/P_{0}J\left( A\right) $ (now, of course, $\Omega ^{1}S$
is the same as $P_{0}$'s radical $P_{0}J\left( A\right) $) as \textbf{equal}
and \textbf{not just isomorphic} to $S$ - and so on, for the whole complex!
It may appear wierd doing so, while moving "up the hill" in subsequent
epimorphisms (which we want to be able to identify as "the natural ones",
i.e. as gotten by taking the quotients by submodules, which allows for
virtuality), but we may always start up at the necessary level and move
"downhill", down to modules consisting of (ever bigger) cosets.

Let us use the notation \ \ above for the module $P_{0}$\ \ in place of $V$,
where we now have $L_{\kappa }=\pi ^{-1}\left( R_{\kappa }\right) $, in the
above notation.

$\ 
\begin{array}{cccccccccccccccc}
\rightarrow & P_{1} & \rightarrow & P_{2} & \underrightarrow{\partial _{2}}
& \_P_{1} & \_\_\underline{} & \_\underline{\partial _{1}}\_ & 
\underrightarrow{} & P_{0} & \longrightarrow & S & \rightarrow & 0 &  &  \\ 
&  &  & \mid &  & \_\mathbf{\mid } & q\searrow &  & \nearrow & \mathbf{\mid }
&  &  &  &  &  &  \\ 
&  &  & \mid &  & \mu _{1}\mathbf{\mid } &  & \Omega ^{1}S &  & \mathbf{\mid 
} & \mu _{0}\_ & \parallel &  &  &  &  \\ 
&  &  & \downarrow &  & \_\mathbf{\downarrow } & \swarrow \widehat{\zeta }%
_{\kappa } &  &  & \mathbf{\downarrow } &  &  &  &  &  &  \\ 
&  &  & 0 & \longrightarrow & \_N_{\kappa } & -- & --- & \rightarrow & 
B_{\kappa } & \longrightarrow & S & \rightarrow & 0 &  & 
\end{array}%
$

We know that any two different homomorphisms $\widehat{\zeta }_{\kappa
}:\Omega ^{1}S\longrightarrow N$, i=1,2, represent different classes in $%
Ext_{A}^{1}\left( S,N\right) $, hence also non-equivalent extensions; of
course, those different homomorphisms are in 1-1 correpondence to the
homomorphisms $\overline{\zeta }_{\kappa }:Hd\left( \Omega ^{1}S\right)
\longrightarrow N$, which are in turn up to an automorphism of $N$ (meaning
here up to an automorphism of $N_{\kappa }$) determined by $\ker \left( 
\overline{\zeta }_{\kappa }\right) $. But "up to an automorphism of $%
N_{\kappa }$" means "up to a proportion", in view of our definition; i.e.
each direct summand in $Hd\left( \Omega ^{1}S\right) $\ determines a
proportionality class. Hence we get the following

\bigskip

\begin{proposition}
\bigskip With the above notation, two homomorphisms in $Hom_{A}\left( \Omega
^{1}S,N\right) $ correspond to proportional extensions of $S$ by $N$
(equivalently, they correspond to each other's multiple in $%
Ext_{A}^{1}\left( S,N\right) $) iff their induced homomorphisms $Hd\left(
\Omega ^{1}S\right) \longrightarrow N$ have the same kernel. We may thus
have a precisely determined $A$-homomorphic correspondence $%
Ext_{A}^{1}\left( S,N\right) \widetilde{\leftrightarrow }Hom_{A}\left(
Hd\left( \Omega ^{1}S\right) ,N\right) \cong k^{r}$ (by Schur's lemma),
where $r$ is the number of \ direct summands of $Hd\left( \Omega
^{1}S\right) $, that are isomorphic to $N$, whereby the propotionality class
of an extension $B$ of $P_{0}$'s head $S$ by $N$ is determined by the direct
summand isomorphic to $N$ in $Hd\left( \Omega ^{1}S\right) $ that is
involved (: is not in the kernel).
\end{proposition}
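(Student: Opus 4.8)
The plan is to unwind the three chains of bijections that have already been assembled in this section and track a single homomorphism through each of them. The starting point is the isomorphism (1), specialized to $n=1$: combined with Proposition 7 it gives $Ext_A^1(S,N)\cong Hom_A(\Omega^1 S,N)$, and the classical interpretation of $Ext^1$ recalled above identifies the left side with equivalence classes of $1$-extensions of $S$ by $N$. So each $\varphi\in Hom_A(\Omega^1 S,N)$ yields a well-defined extension class, and the commutative diagram displayed just before the statement exhibits the representative: push $\varphi$ out along the inclusion $\Omega^1 S\hookrightarrow P_0$ to get $B=B_\varphi$ fitting in $0\to N\to B\to S\to 0$.

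Next I would reduce $\varphi$ to its induced map on heads. Since $N$ is simple, $\varphi$ kills $\Omega^1 S\cdot J(A)$ (the argument is the same one used repeatedly above: the image is semisimple, hence $J(A)$ acts as zero), so $\varphi$ factors uniquely through the natural surjection $\Omega^1 S\twoheadrightarrow Hd(\Omega^1 S)$ as $\varphi=\bar\varphi\circ(\text{proj})$. This sets up the correspondence $Hom_A(\Omega^1 S,N)\cong Hom_A(Hd(\Omega^1 S),N)$, and writing $Hd(\Omega^1 S)$ as a direct sum of simples, the summands isomorphic to $N$ contribute a factor $k$ each by Schur's Lemma, giving the $\cong k^r$ on the nose. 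The key point to record here is that $\bar\varphi$ is nonzero precisely on those $N$-isotypic summands on which $\varphi$ does not vanish — and it is exactly the identification of the layer $Hd(\Omega^1 S)$ set-theoretically (the "=" rather than "$\cong$" in the Notation preceding the statement) that lets us speak of "the" summand involved, not merely an isomorphism class.

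The heart of the proof is the equivalence: $\varphi_1,\varphi_2$ give proportional extensions $\iff$ $\ker\bar\varphi_1=\ker\bar\varphi_2$ (as subsets of $Hd(\Omega^1 S)$). For the forward direction, a proportion is by Definition an isomorphism $\sigma\colon N_1\to N_2$ fitting into the commuting ladder of extensions; pulling this ladder back along $P_0\twoheadrightarrow S$ and restricting to $\Omega^1 S$ shows $\sigma\circ\varphi_1=\varphi_2$, hence $\bar\varphi_2=\sigma\circ\bar\varphi_1$ and the kernels coincide. Conversely, if $\ker\bar\varphi_1=\ker\bar\varphi_2=:U$, then both $\bar\varphi_i$ factor as isomorphisms $Hd(\Omega^1 S)/U\xrightarrow{\sim}N$ followed by nothing, so they differ by an automorphism of $N$; that automorphism is a scalar $\lambda\in k^\ast$ by Schur, and $\lambda$ simultaneously serves as the proportion $\sigma$ and exhibits $\zeta_2=\lambda\zeta_1$ in $Ext_A^1(S,N)$ under the correspondence above — which is why "proportional" and "scalar multiple in $Ext^1$" are the same condition, and why each $N$-isotypic summand of $Hd(\Omega^1 S)$ cuts out exactly one proportionality class.

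The step I expect to be the main obstacle is the compatibility bookkeeping in the forward direction: one must check that the push-out construction of $B_\varphi$ from $\varphi$ is natural enough that a proportion of the \emph{extensions} really does descend to a relation between the $\varphi_i$ on $\Omega^1 S$, rather than only on some quotient. This is where the set-theoretically virtual (rather than merely up-to-isomorphism) reading of the minimal projective resolution of $S$ — taking $\Omega^1 S=P_0 J(A)$ literally and $Hd(\Omega^1 S)$ with its constituents pinned down elementwise, as set up in the Notation and Condition above — does the work, since it lets us transport the middle isomorphism $B_1\to B_2$ of the proportion diagram back to an honest equality of kernels inside one fixed module rather than a diagram chase modulo choices. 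Everything else is assembling bijections that are already in place.
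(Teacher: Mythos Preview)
Your proposal is correct and follows essentially the same route as the paper: the proposition is stated in the paper as a direct consequence of the paragraph immediately preceding it, which records exactly the chain you spell out --- pass from $Hom_A(\Omega^1 S,N)$ to $Hom_A(Hd(\Omega^1 S),N)$ (since $N$ is simple), observe that two such maps agree up to an automorphism of $N$ iff they have the same kernel, and then note that ``up to an automorphism of $N$'' is precisely ``up to a proportion'' in the sense of the definition. Your write-up simply makes both directions of that last equivalence explicit and flags the role of the virtual (set-theoretic) identifications, which the paper leaves implicit; the worry you raise about the forward-direction bookkeeping is not a genuine obstacle, since the pushout description of the extension attached to $\varphi$ is natural in $\varphi$.
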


\bigskip\ 

\begin{proposition}
\bigskip For a module $V$ as above, i.e. with simple head $S$ and with $%
J\left( A\right) V/J^{2}\left( A\right)
V=\tbigoplus\limits_{i=1}^{r}N_{i}\oplus W$, where $N_{1}\cong N_{2}\cong
...\cong N_{r}\cong N$ simple $A$-modules and $W$ is a sum of irreducibles,
all non-isomorphic to $N$, the elements in $Ext_{A}^{1}\left( S,N\right) $
corresponding to the equivalence classes of the $r$ extensions $%
0\longrightarrow N_{\kappa }\longrightarrow B_{\kappa }\longrightarrow
S\longrightarrow 0$ $~~$($N_{\kappa }\cong N$) are again $k$-linearly
independent.
\end{proposition}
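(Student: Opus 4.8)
The plan is to reduce the claim to Proposition 10, which already identifies $Ext^1_A(S,N)$ with $Hom_A(Hd(\Omega^1 S),N)$ in a way that matches proportionality classes to the direct summands of $Hd(\Omega^1 S)$ isomorphic to $N$. Concretely, I would first invoke Notation 9 applied to $P_0$ in place of $V$: the extension $0\to N_\kappa\to B_\kappa\to S\to 0$ arises as $B_\kappa = P_0/\pi^{-1}(R_\kappa)$, where $VJ(A)/VJ^2(A)=N_\kappa\oplus R_\kappa$, and under the correspondence of Proposition 10 the class $\zeta_\kappa\in Ext^1_A(S,N)$ corresponds to the canonical surjection $\widehat\zeta_\kappa:\Omega^1 S\twoheadrightarrow N_\kappa$. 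So the first step is just to write down, for each $\kappa$, the induced map $\overline\zeta_\kappa: Hd(\Omega^1 S)\to N$ and observe it is precisely the projection onto the $\kappa$-th copy $N_\kappa$ followed by the identification $N_\kappa = N$.

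The second step is the linear-algebra core. The identification $Hom_A(Hd(\Omega^1 S),N)\cong k^r$ of Proposition 10 is obtained by writing $Hd(\Omega^1 S)=\bigoplus_{i=1}^r N_i\oplus W$ (the $N_i$ being the copies isomorphic to $N$, $W$ the rest) and sending $f$ to the tuple $(f|_{N_1},\dots,f|_{N_r})\in Aut_A(N)^{\oplus r}\cup\{0\}\cong k^r$ via Schur. Under this identification $\overline\zeta_\kappa$ maps to the $\kappa$-th standard basis vector $e_\kappa$ (up to the nonzero scalar coming from the chosen identification $N_\kappa=N$, which is harmless). Hence the $r$ elements $\zeta_1,\dots,\zeta_r$ correspond to $e_1,\dots,e_r$, which are manifestly $k$-linearly independent in $k^r$. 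Transporting back along the $A$-homomorphic correspondence $Ext^1_A(S,N)\widetilde\leftrightarrow Hom_A(Hd(\Omega^1 S),N)$, which by Proposition 10 is $k$-linear (it is the composite of the isomorphisms (1), which are additive, with the passage to the head, also additive), linear independence is preserved, giving the result.

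The one genuinely delicate point — and the step I expect to be the main obstacle — is making rigorous that the correspondence $Ext^1_A(S,N)\leftrightarrow Hom_A(Hd(\Omega^1 S),N)$ respects the $k$-vector-space structure, i.e. that ``correspond to each other's multiple'' in Proposition 10 is not merely a bijection-plus-scaling phenomenon but genuine $k$-linearity, so that a nontrivial $k$-linear dependence $\sum c_\kappa\zeta_\kappa=0$ would force $\sum c_\kappa\overline\zeta_\kappa=0$ in $Hom_A(Hd(\Omega^1 S),N)$. For this I would appeal to the standard fact (e.g. \cite[2.6, p. 40]{DB} and relation (1)) that the isomorphism $Ext^1_A(S,N)\cong\underline{Hom}_A(\Omega^1 S,N)$ is an isomorphism of $k$-vector spaces, together with Proposition 7 (valid since $A$ is Frobenius) which gives $\underline{Hom}_A(\Omega^1 S,N)=Hom_A(\Omega^1 S,N)$, and finally the fact that restriction-to-the-head $Hom_A(\Omega^1 S,N)\to Hom_A(Hd(\Omega^1 S),N)$ is a $k$-linear isomorphism because $N$ is simple (any homomorphism to $N$ kills $\Omega^1 S\cdot J(A)$). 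Once $k$-linearity is in hand, the argument is immediate.

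One should also note — and I would state this explicitly — that the hypothesis ``$V$ as above'' with the same second radical layer as $P_0$ is what lets us identify the $B_\kappa$ built from $V$ with those built from $P_0$ up to equivalence of extensions: both are pushouts of $0\to\Omega^1 S\to P_0\to S\to 0$ along the projection $\Omega^1 S\to Hd(\Omega^1 S)\to N_\kappa$, and the natural surjection $V\twoheadrightarrow B_\kappa$ factors through $P_0\twoheadrightarrow V$ compatibly. This observation is routine but is the bridge that allows the $V$-version of the proposition to inherit the $P_0$-version's conclusion verbatim.
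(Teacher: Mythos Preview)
Your proposal is correct and follows essentially the same route as the paper: reduce from the general $V$ to its projective cover $P_S=P_0$ via the canonical epimorphism $P_S\twoheadrightarrow V$ (which preserves the extension classes of the $B_\kappa$), and then read off linear independence from the correspondence of Proposition~10. The paper's own proof (given jointly with Theorem~12) is extremely terse --- it records only the reduction to $P_S$ and declares the rest ``clear'' --- whereas you spell out the linear-algebra core (the $\overline\zeta_\kappa$ mapping to standard basis vectors in $k^r$) and, importantly, justify the $k$-linearity of the chain $Ext^1_A(S,N)\cong\underline{Hom}_A(\Omega^1 S,N)=Hom_A(\Omega^1 S,N)\cong Hom_A(Hd(\Omega^1 S),N)$, a point the paper leaves implicit.

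One small correction to your last paragraph: $V$ does \emph{not} in general have the same second radical layer as $P_0$; rather, the epimorphism $P_S\twoheadrightarrow V$ induces a surjection on second radical layers, so the $r$ summands $N_\kappa$ of $J(A)V/J^2(A)V$ lift to $r$ (among possibly more) summands of $Hd(\Omega^1 S)$. Your subsequent remark about the factoring $P_0\twoheadrightarrow V\twoheadrightarrow B_\kappa$ already contains the right idea, so this is only a phrasing issue, not a gap.
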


\begin{theorem}
Conversely, if a module $V$ with the simple $S$ in its head has $r$ factor
modules of the type $%
\begin{array}{c}
S \\ 
N_{\kappa }%
\end{array}%
$, with $N_{\kappa }\cong N$, $\kappa =1,...,r$, with that $S$ on the head,
so that these $r$ extensions correspond to $r$\ $\ k$-linearly independent
elements in $Ext_{A}^{1}\left( S,N\right) $, then we have an injection $%
\tbigoplus\limits_{i=1}^{r}N_{\kappa }\hookrightarrow J\left( A\right)
V/J^{2}\left( A\right) V$ ,i.e., we get to a non-split extension $%
0\longrightarrow \tbigoplus\limits_{i=1}^{r}N_{\kappa }\longrightarrow
B\longrightarrow S\longrightarrow 0$ , where $B$ is a factor module of $V$.
\end{theorem}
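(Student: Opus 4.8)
The plan is to prove the converse by reversing the correspondence established in Propositions 10 and 11. First I would reduce to the essential situation: given the $r$ factor modules $B_\kappa$ of $V$, each fitting into $0\to N_\kappa\to B_\kappa\to S\to 0$ with $N_\kappa\cong N$, I pass to the corresponding elements $\zeta_\kappa\in Ext_A^1(S,N)$, which by hypothesis are $k$-linearly independent. Via the isomorphism $Ext_A^1(S,N)\cong \underline{Hom}_A(\Omega^1 S,N)\cong Hom_A(Hd(\Omega^1 S),N)$ of Proposition 7 and Proposition 10, linear independence of the $\zeta_\kappa$ translates into linear independence of the induced maps $\bar\zeta_\kappa: Hd(\Omega^1 S)\to N$; equivalently, the combined map $(\bar\zeta_1,\dots,\bar\zeta_r): Hd(\Omega^1 S)\to \bigoplus_{\kappa=1}^r N_\kappa$ is surjective (its image, being a submodule of a semisimple module generated by $r$ independent maps onto copies of $N$, is all of $\bigoplus N_\kappa$).

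Next I would climb back from $\Omega^1 S$ to $V$. Since each $B_\kappa$ is a factor module of $V$ with head $S$, the canonical epimorphism $V\twoheadrightarrow B_\kappa$ factors through $P_0\twoheadrightarrow V$ and induces on second radical layers a map $J(A)V/J^2(A)V \twoheadrightarrow N_\kappa$ (the radical of $B_\kappa$). The key point is that these $r$ maps are precisely the reductions of the $\bar\zeta_\kappa$ through the natural surjection $Hd(\Omega^1 S)\twoheadrightarrow J(A)V/J^2(A)V$ coming from $P_0\twoheadrightarrow V$ — this is exactly the commuting-triangle data in the big resolution diagram preceding Proposition 10, read with $V$ in place of $B_\kappa$. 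Assembling them, I get a single map $\Psi: J(A)V/J^2(A)V \to \bigoplus_{\kappa=1}^r N_\kappa$ whose composition with $Hd(\Omega^1 S)\twoheadrightarrow J(A)V/J^2(A)V$ is the surjection $(\bar\zeta_1,\dots,\bar\zeta_r)$; hence $\Psi$ itself is surjective. Dualizing (or using semisimplicity of the layer together with Schur's lemma), a surjection from a semisimple module onto $\bigoplus_{\kappa=1}^r N_\kappa$ splits, so $\bigoplus_{\kappa=1}^r N_\kappa$ embeds as a direct summand of $J(A)V/J^2(A)V$. Pulling this embedding back through $\pi: J(A)V\twoheadrightarrow J(A)V/J^2(A)V$ and forming $B:=V/\pi^{-1}(R)$ for the complementary summand $R$ yields the desired factor module, sitting in the non-split extension $0\to\bigoplus N_\kappa\to B\to S\to 0$; non-splitness is immediate because a splitting would force the $\zeta_\kappa$ to vanish in $Ext_A^1(S,N)$, contradicting their independence.

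The main obstacle I expect is the bookkeeping in the middle step: making rigorous the claim that the $r$ maps $J(A)V/J^2(A)V\to N_\kappa$ induced by $V\twoheadrightarrow B_\kappa$ are "the same" as the $\bar\zeta_\kappa$ up to the canonical surjection $Hd(\Omega^1 S)\twoheadrightarrow J(A)V/J^2(A)V$. This is where the set-theoretically \emph{virtual} approach earns its keep — the equalities (rather than mere isomorphisms) of the $N_\kappa$ across the diagram let one identify these maps literally rather than only up to ambiguous automorphisms, so that linear \emph{independence} is genuinely preserved and the combined map is genuinely surjective onto a genuinely chosen summand. I would be careful to invoke Proposition 10's characterization (proportionality $=$ same kernel of the induced head map) precisely here, so that distinctness of kernels — guaranteed by linear independence — gives that the images $N_\kappa$ in the layer are positioned independently, forcing the internal direct sum $\bigoplus_{\kappa=1}^r N_\kappa$ rather than a smaller diagonal copy. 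The remaining splitting and pullback arguments are routine given the Frobenius/symmetric hypotheses already in force.
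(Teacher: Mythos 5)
Your argument is correct and takes essentially the same route as the paper: both go through the projective cover $P_S$ of $S$ and its second virtual radical layer $Hd(\Omega^1 S)$, exploiting the Proposition~10 correspondence between $Ext^1_A(S,N)$ and $Hom_A\left(Hd(\Omega^1 S),N\right)$. The paper's own proof is terser --- it simply lifts the chosen $N_\kappa$'s along $P_S\twoheadrightarrow V$ to the layer $Hd(\Omega^1 S)$ and asserts that this respects the extension classes --- whereas you make explicit the factorization of the $\bar\zeta_\kappa$ through $J(A)V/J^2(A)V$, the resulting surjection onto $\bigoplus N_\kappa$, and the splitting by semisimplicity, which is a welcome filling-in of the same underlying idea.
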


\begin{proof}
\bigskip \bigskip By pointing out that we have an epimorphism onto $V$ from
the projective cover $P_{S}$ of $S$, we may substitute $P_{S}$ for $V$ and
study the lifts of these extensions there; the reason why this epimorphism
exists is that the projective cover of $V$ is just $P_{S}$, i.e. the same as
the projective cover of its head $S$ (see \cite[6,23 or 6.25 (ii)]{CR} ).
Speaking in terms of virtual radical layers, we may lift the chosen $%
N_{\kappa }$'s along $P_{S}\twoheadrightarrow V$, set theoretically and
elementwise, to ones now consisting of bigger cosets as elements, \textbf{in
a procedure that clearly respects the extension classes} of the $B_{\kappa }$%
's.

The converse is clear.
\end{proof}

\bigskip This proposition may of course be dualized, to get a similar one in
terms of the virtual socle series:

\bigskip \bigskip

\bigskip

\section{Module Diagrams}

\bigskip 

As we intuitively did in the previous section, so also in our new, "virtual
category" of a given module $K$ we are going to identify two (plus one)
kinds of pairs of naturally isomorphic subsections of the given module: 

a. Pairs "of type q" $\left\{ A/B,\ A/C\diagup B/C\right\} $ and

b. Pairs "of type s" $\left\{ \left( \tbigoplus\limits_{i=1}^{n}M_{i}\right)
\diagup \left( \tbigoplus\limits_{i=1}^{n}S_{i}\right) ,\
\tbigoplus\limits_{i=1}^{n}M_{i}/S_{i}\right\} $, to which we shall add all
pairs "of type S"\ $\left\{ \widetilde{\pi }^{-1}\left( N\right) ,\varpi
^{-1}\left( N\right) \right\} $, i.e. preimages of canonical epimorphisms
from direct sums and their "confined preimages".

\bigskip The direct sums that are going to appear inside our virtual
category are only sums that appear as sections of the module; otherwise we
can apparently not define sums in general in such a category.

\begin{definition}
Given a module $K$, we define its "virtual category" \bigskip $\hat{W}_{K}$,
whose objects are obtained from the family of all of $K$' s sections,
sections of sections and so on, after identifying  (also elementwise meant)
all pairs "of type q" $\left\{ A/B,\ A/C\diagup B/C\right\} $ and also all
pairs "of type s" $\left\{ \left( \tbigoplus\limits_{i=1}^{n}M_{i}\right)
\diagup \left( \tbigoplus\limits_{i=1}^{n}S_{i}\right) \text{,}\
\tbigoplus\limits_{i=1}^{n}M_{i}/S_{i}\right\} $ of naturally isomorphic
subsections of $K$, as well as pairs "of type $S$"\ $\left\{ \widetilde{%
\varpi }^{-1}\left( N\right) ,\pi ^{-1}\left( N\right) \right\} $\ (see
previous definition),\ and in which morphisms are the ones induced by the
module homomorphisms between the sections.
\end{definition}

\bigskip It is immediate to see that $\hat{W}_{K}$\ is an exact category.

We shall usually denote the objects of the virtual category $\hat{W}_{K}$\
(to be called "virtual sections" of $K$) with the same letters as the
sections themselves, although we shall be considering them as objects of the
virtual category of $K$, indeed corresponding to equivalence classes of
subsections, resulting from the identifications that we applied on the
sections of $K$. This convention shall always be implied in the following,
with no further notification.

We may also notice that in the case of identifications of type $S$ it is
more handy to use the confined preimages as representatives of the virtual
sections in question.

In fact the previous chapter, where we have defined "virtuality" in a
set-theoretic way (which may easily be shown to be equivalent to that of the
virtual category), may also be viewed in the framework of the virtual
category.

Let now $A$ be a Frobenius $k$-algebra from now on.

As we have seen (Theorem 12), whenever we have an indecomposable with simple
head, that simple head forms non-split extensions with all virtual
irreducibles on the next radical layer, extensions which are also $k$%
-linearly independent in case they are isomorphic; further, if the
indecomposable is projective (i.e., a p.i.m.), those extensions correspond
to a $k$-basis of the corresponding module of type $Ext_{A}^{1}\left(
,\right) $.

We shall in the following be working in a suitable category of modules,
which may either be that of the finitely generated modules over such a
Frobenius $k$-algebra $A$, or of the finitely generated rational $G$%
-modules, where $G$ is a reductive algebraic group, or even a truncated
subcategory $C\left( \pi \right) $ of that, for some saturated subset $\pi $
of $X\left( T\right) _{+}$ \cite[chapter A]{JCJ}. Whenever we come to
representations of algebraic groups, if not specified, our notation shall be
that of \cite{JCJ}.

So henceforth, whenever we just refer to "a module", if not specified, we
shall mean a module in any of these categories. \textbf{When working with
(sub)sections of a particular module, that shall automatically be done in
the context of its virtual category, in which subsequently all equalities
have to be understood.}

It is very important for us, that also in the case of representations of
reductive algebraic groups we do have some propositions quite similar to the
last two propositions (of section 2), in which we may now use the Weyl
modules and their duals instead of the p.i.m.'s that were used for modules
over a Frobenius algebra (see f.ex. \cite[II, 2.12(4) combined with 2.14
prop.\&(4)]{JCJ}) - leading, by a similar to the above procedure, to:

\begin{proposition}
\bigskip For $\lambda $,$\mu \in X\left( T\right) _{+}$, with $\mu \ngtr
\lambda $, we have a precisely determined $G$-homomorphic correspondence $%
Ext_{G}^{1}\left( L\left( \lambda \right) ,L\left( \mu \right) \right) 
\widetilde{\leftrightarrow }Hom_{G}\left( radV\left( \lambda \right)
,L\left( \mu \right) \right) $

$\widetilde{\leftrightarrow }Hom_{G}\left( radV\left( \lambda \right)
\diagup rad^{2}V\left( \lambda \right) ,L\left( \mu \right) \right) 
\widetilde{\leftrightarrow }k^{r}$ for some $r\geq 0$, as well as

$Ext_{G}^{1}\left( L\left( \mu \right) ,L\left( \lambda \right) \right) 
\widetilde{\leftrightarrow }Hom_{G}\left( L\left( \mu \right) ,H^{0}\left(
\lambda \right) \diagup socH^{0}\left( \lambda \right) \right) \widetilde{%
\leftrightarrow }$

$\widetilde{\leftrightarrow }Hom_{G}\left( L\left( \mu \right)
,soc^{2}H^{0}\left( \lambda \right) \diagup socH^{0}\left( \lambda \right)
\right) \widetilde{\leftrightarrow }k^{r}$, while also $Ext_{G}^{1}\left(
L\left( \lambda \right) ,L\left( \mu \right) \right) \equiv
Ext_{G}^{1}\left( L\left( \mu \right) ,L\left( \lambda \right) \right) $.
\end{proposition}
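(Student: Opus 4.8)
The plan is to run, in the $G$-module setting, exactly the argument that produced Propositions 10 and 11 above, with the Weyl module $V\left( \lambda \right) $ (resp.\ the induced/costandard module $H^{0}\left( \lambda \right) $) playing the role that the p.i.m.\ $P_{0}$ played there. The inputs that make this work are the facts cited from \cite{JCJ}: for $\mu \ngtr \lambda $ one has $Ext_{G}^{1}\left( V\left( \lambda \right) ,L\left( \mu \right) \right) =0$, and $Hom_{G}\left( V\left( \lambda \right) ,L\left( \mu \right) \right) =0$ unless $\mu =\lambda $ (dually $Ext_{G}^{1}\left( L\left( \mu \right) ,H^{0}\left( \lambda \right) \right) =0$ and $Hom_{G}\left( L\left( \mu \right) ,H^{0}\left( \lambda \right) \right) =0$ unless $\mu =\lambda $), together with $socH^{0}\left( \lambda \right) =HdV\left( \lambda \right) =L\left( \lambda \right) $ and $End_{G}\left( L\left( \mu \right) \right) =k$ (Schur's lemma, $k$ algebraically closed).

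First I would apply $Hom_{G}\left( -,L\left( \mu \right) \right) $ to the short exact sequence $0\rightarrow radV\left( \lambda \right) \rightarrow V\left( \lambda \right) \rightarrow L\left( \lambda \right) \rightarrow 0$. In the resulting long exact sequence the map $Hom_{G}\left( V\left( \lambda \right) ,L\left( \mu \right) \right) \rightarrow Hom_{G}\left( radV\left( \lambda \right) ,L\left( \mu \right) \right) $ is zero (a homomorphism $V\left( \lambda \right) \rightarrow L\left( \mu \right) $ factors through $L\left( \lambda \right) =V\left( \lambda \right) \diagup radV\left( \lambda \right) $, hence dies on $radV\left( \lambda \right) $), and $Ext_{G}^{1}\left( V\left( \lambda \right) ,L\left( \mu \right) \right) =0$; so the connecting map is a canonical isomorphism $Hom_{G}\left( radV\left( \lambda \right) ,L\left( \mu \right) \right) \widetilde{\longrightarrow }Ext_{G}^{1}\left( L\left( \lambda \right) ,L\left( \mu \right) \right) $. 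Since $L\left( \mu \right) $ is simple, any homomorphism $radV\left( \lambda \right) \rightarrow L\left( \mu \right) $ kills $rad^{2}V\left( \lambda \right) =rad\left( radV\left( \lambda \right) \right) $ and so factors uniquely through the head $radV\left( \lambda \right) \diagup rad^{2}V\left( \lambda \right) $ (this is the second $\widetilde{\leftrightarrow }$); that head is semisimple, so $Hom_{G}\left( radV\left( \lambda \right) \diagup rad^{2}V\left( \lambda \right) ,L\left( \mu \right) \right) \cong k^{r}$ by Schur's lemma, with $r=\left[ radV\left( \lambda \right) \diagup rad^{2}V\left( \lambda \right) :L\left( \mu \right) \right] $. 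All of these identifications are "precisely determined" (elementwise) in the virtual category of $V\left( \lambda \right) $, which is the content of the symbols $\widetilde{\leftrightarrow }$.

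The second chain is the literal dual: apply $Hom_{G}\left( L\left( \mu \right) ,-\right) $ to $0\rightarrow L\left( \lambda \right) \rightarrow H^{0}\left( \lambda \right) \rightarrow H^{0}\left( \lambda \right) \diagup socH^{0}\left( \lambda \right) \rightarrow 0$; using $Ext_{G}^{1}\left( L\left( \mu \right) ,H^{0}\left( \lambda \right) \right) =0$ and $Hom_{G}\left( L\left( \mu \right) ,H^{0}\left( \lambda \right) \right) =Hom_{G}\left( L\left( \mu \right) ,L\left( \lambda \right) \right) $ one gets a canonical isomorphism $Hom_{G}\left( L\left( \mu \right) ,H^{0}\left( \lambda \right) \diagup socH^{0}\left( \lambda \right) \right) \widetilde{\longrightarrow }Ext_{G}^{1}\left( L\left( \mu \right) ,L\left( \lambda \right) \right) $, and a homomorphism out of the simple $L\left( \mu \right) $ lands in the socle of the target, i.e.\ in $soc^{2}H^{0}\left( \lambda \right) \diagup socH^{0}\left( \lambda \right) $, a semisimple module, giving the last $\widetilde{\leftrightarrow }$ and $\cong k^{r}$. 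The equivalence $Ext_{G}^{1}\left( L\left( \lambda \right) ,L\left( \mu \right) \right) \equiv Ext_{G}^{1}\left( L\left( \mu \right) ,L\left( \lambda \right) \right) $ I would get from the contravariant duality $D$ on the category ($DL\left( \nu \right) =L\left( \nu \right) $, $DV\left( \lambda \right) =H^{0}\left( \lambda \right) $, $D$ exact): $Ext_{G}^{1}\left( L\left( \lambda \right) ,L\left( \mu \right) \right) \cong Ext_{G}^{1}\left( DL\left( \mu \right) ,DL\left( \lambda \right) \right) =Ext_{G}^{1}\left( L\left( \mu \right) ,L\left( \lambda \right) \right) $; in particular the two occurrences of $r$ agree, which justifies the common letter.

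The argument is routine once the cited vanishing statements are in hand; the two points needing care are: (i) that the hypothesis $\mu \ngtr \lambda $ is exactly what forces the relevant $Ext^{1}$-terms to vanish --- the composition factors of $radV\left( \lambda \right) $ all have highest weight strictly below $\lambda $, and in the highest-weight-category structure $Ext_{G}^{1}$ from a standard object to $L\left( \mu \right) $ (resp.\ from $L\left( \mu \right) $ to a costandard object) vanishes unless $\mu >\lambda $; and (ii) when one works instead inside a truncated subcategory $C\left( \pi \right) $ with $\pi $ saturated, one must check that $V\left( \lambda \right) $ and $H^{0}\left( \lambda \right) $ remain the standard and costandard objects of $C\left( \pi \right) $ and that $Ext^{1}$ and the duality computed inside $C\left( \pi \right) $ agree with the ambient ones --- which is precisely why $\pi $ is required to be saturated. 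I expect (ii), the bookkeeping needed to stay inside $C\left( \pi \right) $, to be the only genuine obstacle.
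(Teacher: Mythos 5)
Your proof is correct and takes exactly the route the paper intends: the paper itself gives no proof, but only points to the analogy with its Propositions 10 and 11 and to the cited facts from Jantzen (II.2.12(4), II.2.14), and your argument is precisely the long-exact-sequence computation behind those references, with $V(\lambda)$ and $H^0(\lambda)$ in the roles the p.i.m.\ played earlier. In effect you have supplied the details the paper leaves to the citations, so there is nothing to add.
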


\ 

\ \ \ \ \ \ \ \ \ \ \ \ \ \ \ \ \ \ \ \ \ \ \ \ \ \ \ \ \ \ \ \ \ \ \ \ \ \
\ \ \ \ \ \ \ \ \ \ \ \ \ \ \ \ \ \ \ \ \ \ \ \ \ \ \ \ \ \ \ \ \ \ \ \ \ \
\ \ \ \ \ \ \ \ \ \ \ \ \ \ \ \ \ \ \ \ \ \ \ \ \ \ \ \ \ \ \ \ \ \ \ \ \ \
\ \ \ \ \ \ \ \ \ \ \ \ \ \ \ \ \ \ \ \ \ \ \ \ \ \ \ \ \ \ \ \ \ \ \ \ \ \
\ \ \ \ \ \ \ \ \ \ \ \ \ \ \ \ \ \ \ \ \ \ \ \ \ \ \ \ \ \ \ \ \ \ \ \ \ \
\ \ \ \ \ \ \ \ \ \ \ \ \ \ \ \ \ \ \ \ \ \ \ \ \ \ \ \ \ \ \ \ \ \ \ \ \ \
\ \ \ \ \ \ \ \ \ \ \ \ \ \ \ \ \ \ \ \ \ \ \ \ \ \ \ \ \ \ \ \ \ \ \ \ \ \
\ \ \ \ \ \ \ \ \ \ \ \ \ \ \ \ \ \ \ \ \ \ \ \ \ \ \ \ \ \ \ \ \ \ \ \ \ \
\ \ \ \ \ \ \ \ \ \ \ \ \ \ \ \ \ \ \ \ \ \ \ \ \ \ \ \ \ \ \ \ \ \ \ \ \ \
\ \ \ \ \ \ \ \ \ \ \ \ \ \ \ \ \ \ \ \ \ \ \ \ \ \ \ \ 

To start moving toward a diagram of an indecomposable module, we may attach
a vertex to each simple at all layers, and start by joining the vertex
corresponding to the simple head with all the vertices of the second layer,
give the edges an "upward" orientation, and attach to each of them the
corresponding extention class (equivalently, the corresponding elements of $%
Ext_{A}^{1}$).

We may proceed this way, by taking the indecomposable subfactors with the
simple heads of the subsequent layers and join each simple head to the
essential irreducibles of the next radical layer of the relevant
indecomposable sector, but such a process raises many subtle conditions, to
make it possible.

We have a freedom of choice on "essential"\ irreducible summands, whenever
there are multiple isomorphic copies on a layer, resulting in a variation of
the extension classes, according to Theorem . However it is very important
to make a properly coordinated choice of virtual irreducibles on the
different layers, so as to make the process possible, and also get to a
nicely "tuned" diagram, instead of a possibly chaotic picture. But what does
it exactly mean for a diagram to be thus "nice"? We are thus lead to the
notion of a "tuned" diagram:

\bigskip

\begin{definition}
We call a (virtual) diagram of a module $M$, that is associated to a virtual
radical series, a \textbf{tuned} one, if the following condition is
satisfied: For $0\leq i<j\,\,$, any indecomposable summand of the "radical
section" $rad^{i}M/rad^{j}M$ either precisely corresponds to some connected
component of the corresponding radical section of\ the diagram (: "is
visible on the diagram") or is a submodule of a direct sum of visible
isomorphic direct summands of that section, i.e. ones that "virtually
correspond" to (: "are realized by") some connected components of the
corresponding graph section. Such indecomposable summands will be called "$%
\left\{ i,j\right\} $\textbf{-pillars}", of \textbf{height} at most $j-i$.
An $\left\{ i,j\right\} $\textbf{-colonnade} is a maximal direct sum of (at
least two) isomorphic $\left\{ i,j\right\} $-pillars. A pillar with no other
isomorphic pillars in the same radical section is called a \textbf{single
pillar.} A \textbf{specification} \textbf{of a colonnade} is any choice of
specific pillars (of which we are thus also getting an \textbf{"illustration"%
}, as they become \textbf{visible}) for the expression of the colonnade as a
direct sum. A pillar $B$ is "dominated" $\left( \leq \right) $ by another
one $A$,\ if in any diagrammatic illustration of the module $M$, in which
the pillar $A$ becomes visible, the pillar $B$ is depicted by a subdiagram
of the diagram section depicting $A$. In this context we shall be talking of
"maximally/minimally dominating pillars in a (radical) section".
Alternatively (conversely) we shall also describe that condition by telling
that $A$ \textbf{overcoats} $B$. 

A single pillar of a radical section is an indecomposable direct summand
with no other isomorphic summand in the same radical section. A \textbf{peak}
of a colonnade is a single pillar that overcoats a pillar of the colonnade.
As suggested above, we shall call a pillar or, more generally, a subsection
of $M$, a \textbf{visible} one (on the diagram), if it properly (:
virtually) corresponds to a certain subdiagram.

In the case of just a virtual radical or socle series of $M$, we shall say
that a (virtual) section is visible or realizable on it, if all its simple
(virtual) sections are included in it.
\end{definition}

\bigskip So, we might speak of visible sections, hoping also to somehow get
able to gain an overview of the non-visible ones.

Conversely, we might ask: \textit{"When does a "locus" of the virtual
radical/socle series (respectively, of a virtual diagram) correspond to a
section of the module?"} Our term in that case shall be that the subdiagram
(or locus from the virtual radical/socle series) may or may not be \textbf{%
realizable}, where the virtual section that realizes the locus \textit{%
"overcoats"} it. 

\textbf{A fundamental merit of a "good" virtual diagram shall be the optimal
realizability of such loci and, conversely, the visibility of any subsection
of the module or of a "parallel" isomorphic copy of it, meaning that they
belong to the same colonnade.}

\begin{definition}
\bigskip We introduce a "weak" partial ordering "$\sqsubseteq $" in the set $%
\Pi $ of colonnades of a module $M$, meaning that there is a specification
of the "bigger", where at least one pillar dominates a specification of the
"smaller" one. We shall be especially interested in the \textbf{maximal
colonnades} with respect to this ordering.
\end{definition}

\bigskip\ After introducing all these key-notions, we shall prove some
lemmas, that shall lay the way for the proof of the existence theorem for
virtual diagrams.

\bigskip \bigskip 

\begin{lemma}
\bigskip No two maximally dominating pillars of a module $K$ may have a
(virtually!) common section.
\end{lemma}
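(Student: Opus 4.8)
The plan is to argue by contradiction, using the notion of domination together with Theorem 12 (the $k$-linear independence of the extension classes attached to the isomorphic virtual irreducibles on a radical layer). Suppose $A$ and $B$ are two distinct maximally dominating pillars of $K$ that share a virtually common section $C$, i.e. a nonzero virtual subsection which is simultaneously visible inside the subdiagram of $A$ and inside the subdiagram of $B$ in any illustration in which both $A$ and $B$ are visible. The first step is to normalise the situation: by isolating the appropriate radical section $rad^iK/rad^jK$ containing both pillars (taking $i$ minimal and $j$ maximal so that $A$ and $B$ both live as indecomposable summands there), I reduce to studying a single radical section, so that ``domination'' becomes a statement about connected components of that fixed graph section.

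\medskip

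Next I would exploit maximality. Since $A$ is maximally dominating, it is not itself dominated by any strictly larger pillar; the same for $B$. The key observation is that a virtually common section $C$ of $A$ and $B$ forces, at the level of the second virtual radical layer of the indecomposable sector containing $A+B$ (or of the relevant subsection), a simple constituent $N$ that is ``used twice'': once to build $A$ off its head $S$, once to build $B$ off the same head (after passing, via the projective cover $P_S \twoheadrightarrow$ that sector, to the situation of Notation 11 and Theorem 12). Then the extensions $0\to N_\kappa \to B_\kappa \to S\to 0$ realising $A$ through $C$ and realising $B$ through $C$ would correspond to the \emph{same} direct summand isomorphic to $N$ in $Hd(\Omega^1 S)$ — by Proposition 10, same summand means same proportionality class — and hence, via Proposition 8 / Theorem 12, their classes in $Ext^1_A(S,N)$ are proportional rather than $k$-linearly independent. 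Using ``vertical priority'' (the subjugation of minor pillars' realizations to the major ones), I would derive that $A$ and $B$ then realize, below $C$, the \emph{same} colonnade specification, so that in any illustration making $A$ visible the subdiagram depicting $B$ is forced to sit inside that of $A$ (or vice versa); that is, one of $A$, $B$ dominates the other, contradicting that both are \emph{maximally} dominating and distinct.

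\medskip

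The main obstacle I expect is making the phrase ``virtually common section'' precise enough that the $Ext^1$-independence argument can be triggered — i.e. showing that a shared virtual subsection $C$ really does pin down a single set-theoretic direct summand of $Hd(\Omega^1 S)$ rather than merely an isomorphic copy. This is exactly where the set-theoretic (as opposed to up-to-isomorphism) bookkeeping of the virtual radical series, together with the identifications of type $q$, $s$, $S$ from Section 2 and the confined-preimage construction, has to do real work: one must trace $C$ through the canonical epimorphism from the projective cover and check that the ``confined'' choice of summand is the same on the $A$-side and on the $B$-side. Once that identification is nailed down, the contradiction with Theorem 12's linear independence (hence with maximality of the two dominating pillars) is essentially formal. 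I would also need a small lemma to the effect that domination is antisymmetric on distinct pillars and that ``maximally dominating'' pillars are unique representatives of their dominance-equivalence class, but that follows directly from Definition 15 and the weak partial ordering ``$\sqsubseteq$'' of Definition 16.
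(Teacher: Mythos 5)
Your approach is genuinely different from the paper's, and I believe it has a gap that would prevent it from going through.

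The paper first reduces the lemma to a specific geometric configuration that is quite different from the one you envision. There, the two pillars $M$ and $N$ live at \emph{different} Loewy levels: $M$ is an indecomposable summand of $K/\mathrm{rad}^{i+s}K$ (head on layer $0$), $N$ is a summand of $\mathrm{rad}^{i}K$ (head on layer $i$), and the shared section $S$ sits in the overlap $\mathrm{rad}^{i}K/\mathrm{rad}^{i+s}K$, being virtually a submodule of $M$ and a factor of $N$. They do \emph{not} share a head. Your argument, by contrast, passes to the projective cover of a common head $S$, invokes Notation 11/Theorem 12 for a single fixed $S$, and tries to show that the two pillars pin down the \emph{same} summand of $\mathrm{Hd}(\Omega^1 S)$, hence proportional rather than independent extension classes. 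That mechanism simply doesn't match the configuration: the two pillars of the lemma typically do not both arise as extensions of the same simple head by the same simple $N$, so the $\mathrm{Ext}^1_A(S,N)$ independence of Theorem 12 is not the right lever. Even when the two pillars do share a head, sharing a common deeper section does not force them onto the same summand of $\mathrm{Hd}(\Omega^1 S)$ — having isomorphic-but-distinct summands there is precisely the colonnade situation the theory is built to accommodate, so no contradiction with linear independence would follow.

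The paper's actual argument is constructive, not a reduction to independence: from the decompositions $K/\mathrm{rad}^{i+s}K = M\oplus M_0$ and $\mathrm{rad}^iK = N\oplus N_0$ it forms the confined preimage $\Lambda := \widetilde{\pi}_1^{-1}(M)$ under $\pi_1 : K \to K/\mathrm{rad}^{i+s}K$, checks via the exact sequence $0 \to N'\oplus N_0' \to K \to M\oplus M_0 \to 0$ and the confined-preimage calculus that $\Lambda \cap \mathrm{rad}^{i+s}K = N'$ and that $\Lambda$ is a direct summand of $K$, indecomposable of Loewy length $\kappa+1$, which precisely overcoats both $M$ and $N$. This exhibits a strictly larger dominating pillar — the Yoneda splice of the two relevant short exact sequences — directly contradicting maximality. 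Your closing appeal to ``vertical priority'' to force one pillar to dominate the other is where a complete proof would need to do exactly this splice construction; as written it is a plausibility claim rather than an argument, and you yourself flag the key step (nailing down a single set-theoretic summand of $\mathrm{Hd}(\Omega^1 S)$) as an unresolved obstacle. The confined-preimage/splice route the paper takes is what does that real work.
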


\begin{proof}
\bigskip The question is easily seen to boil down to the following:

\textit{\textquotedblleft Let }$K$\textit{\ be a module\ of radical length }$%
\kappa +1$\textit{, and }$M$\textit{, }$N$\textit{\ a pair of pillars\ of }$K
$\textit{, where }$M$\textit{\ is a direct summand of }$K/rad^{i+s}K$\textit{%
, }$N$\textit{\ of }$rad^{i}K$\textit{, both of maximal radical length
(resp., }$i+s$\textit{\ and }$\kappa -i+1$\textit{), for some }$i>1$\textit{%
, }$s>0$\textit{\ with }$i+s<\kappa +1$\textit{, and such that }$M$\textit{, 
}$N$\textit{\ have a common section }$S$\textit{, meaning here that }$S$ is
virtually a submodule of $M$\textit{\ and a factor module of }$N$,\textit{\ 
and }$S$\textit{\ in the Loewy series "extends over" all the }$s$\textit{\
layers of }$K$\textit{'s radical section }$rad^{i}K/rad^{i+s}K$\textit{; we
may furthermore easily reduce to the case where }$S=rad^{i}M$\textit{=}$%
N/rad^{s}N\subseteqq rad^{i}K/rad^{i+s}K$\textit{\ (always virtually meant).
Then neither }$M$\textit{\ or }$N$\textit{\ can be a\ maximally dominating
pillar of }$M$\textit{.\textquotedblright }

The condition for $\NEG{M}$ and $N$ does not only suggest the existence of a
homomorphism $\varphi :N\rightarrow M$, where $\ker \varphi =\func{Im}%
\varphi =S$, but here we do also have something analogue to the inclusion,
considered as a special kind of monomorphism. \textit{This analogue becomes
only possible in the frame of our virtual category, in which we have (the
possibility) to see everything also set-theoretically at the same time, and
not just up to isomorphism.}

By the definition of $S$ it is clear that it is realizable in $T$, inasmuch
as $M$ and $N$ are so.

$%
\begin{bmatrix}
&  & ||||| &  &  &  \\ 
&  & ||||||| &  &  &  \\ 
&  & |||||||| &  &  &  \\ 
&  & \#\#\# &  &  &  \\ 
&  & \#\# &  &  &  \\ 
&  & == &  &  &  \\ 
&  & === &  &  &  \\ 
&  & == &  &  & 
\end{bmatrix}%
$

In my primitive drawing above I am trying to visualize the situation by
denoting the location of $S$, shared in common by both $M$ and $N$,\ with
\#\#, that belonging only to M by vertical lines \TEXTsymbol{\vert}%
\TEXTsymbol{\vert}\TEXTsymbol{\vert}\TEXTsymbol{\vert} , the one exclusively
belonging to $N$ by horizontal lines == .

Let us first notice that, by going from the radical to the socle series, the
location of section $S$ shall still extend over "the same" $s$ layers,
because: (a) The socle series length of $S$ and the indecomposables $M$ and $%
N$ remain the same as that of radical series, and (b) nothing can "fall
down", so as to extend the length $i+s$ of $M$ in $K$, because that would
contradict the indecomposability of $M$, unless $M$ as a whole be shifted
downward, taking also $S$ down with it; but that would necessarily decrease
the length $\kappa -i+1$ of the indecomposable $N$, a contradiction.

Set $K/rad^{i+s}K=M\oplus M_{0}$, implying $rad^{i}K/rad^{i+s}K=rad^{i}M%
\oplus rad^{i}M_{0}=S\oplus S_{0}$, where $S_{0}:=$\ $rad^{i}M_{0}$. We
have, further, the following\textit{\ }virtual equalities: $%
K/rad^{i}K=K/rad^{i+s}K\diagup rad^{i}K/rad^{i+s}K=\left( M\oplus
M_{0}\right) \diagup \left( S\oplus S_{0}\right) =M/S\oplus
M_{0}/S_{0}=M^{\prime }\oplus M_{0}^{\prime }$, where we have put $M^{\prime
}:=M/S$ and $M_{0}^{\prime }:=M_{0}/S_{0}$. Similarly, by writing $%
rad^{i}K=N\oplus N_{0}$, $rad^{s}N=N^{\prime }$, $rad^{s}N_{0}=N_{0}^{\prime
}$ we get $rad^{i+s}K=N^{\prime }\oplus N_{0}^{\prime }$.

Consider now the canonical epimorphisms $\pi :K\rightarrow K/rad^{i}K$, $\pi
_{1}:K\rightarrow K/rad^{i+s}K$,$\ \pi _{2}:K/rad^{i+s}K\rightarrow
K/rad^{i}K$; we are now going to utilize the notion of confined preimage.

Clearly $\pi =\pi _{2}\circ \pi _{1}$, therefore $\widetilde{\pi }%
^{-1}\left( M^{\prime }\right) =\widetilde{\pi }_{1}^{-1}\left( \widetilde{%
\pi }_{2}^{-1}\left( M^{\prime }\right) \right) =\widetilde{\pi }%
_{1}^{-1}\left( M\right) $, the intersection of which with $rad^{i+s}K$ has
to be the same as that of $\widetilde{\pi }_{0}^{-1}\left( S\right) $, where 
$\pi _{0}:rad^{i}K\rightarrow rad^{i}K/rad^{i+s}K$, inasmuch as $s>0$, i.e. $%
\widetilde{\pi }_{1}^{-1}\left( M\right) \cap rad^{i+s}K=\widetilde{\pi }%
_{0}^{-1}\left( S\right) \cap rad^{i+s}K=N^{\prime }$. However this means
that $\widetilde{\pi }_{1}^{-1}\left( M\right) $ is a module $\Lambda $,
that precisely overcoats $M$ and $N$ of Loewy length $\kappa +1$ and is
clearly indecomposable; as soon as we prove this also to be a direct summand
of $K$, then it shall be a pillar, that properly dominates over both $M$ and 
$N$,\ contrary to their maximality.

Consider the (virtual)\ exact sequence $0\rightarrow N^{\prime }\oplus
N_{0}^{\prime }\rightarrow K\rightarrow M\oplus M_{0}\rightarrow 0$, where
the first map is an inclusion and the second is $\pi _{1}:K\rightarrow
K/rad^{i+s}K$ $\ \left( =M\oplus M_{0}\right) $; therefore by looking at the
relevant confined preimage we get $\widetilde{\pi }_{1}^{-1}\left( M\right)
\cap rad^{i+s}K=N^{\prime }$, where $rad^{i+s}K=N^{\prime }\oplus
N_{0}^{\prime }$, implies $\widetilde{\pi }_{1}^{-1}\left( M_{0}\right) \cap
rad^{i+s}K=N_{0}^{\prime }$, meaning that $\Lambda $\ is a direct summand of 
$K$, as claimed - leading to the suggested contradiction.

\ \ \ 
\end{proof}

\bigskip It is easily seen that the module $\Lambda $ obtained in the proof
of the previous lemma is indeed the cup product (seen as Yoneda splice, see
f.ex. \cite[6 (8), p. 35]{JC}) of the extensions\ $0\rightarrow N^{\prime
}\rightarrow N\rightarrow M\rightarrow M^{\prime }\rightarrow 0$ \ (where $%
N\rightarrow M$ is the map $\varphi $ above) and \ $0\rightarrow N^{\prime
}\rightarrow N\rightarrow S\rightarrow 0$ \ \ \ $0\rightarrow S\rightarrow
M\rightarrow M^{\prime }\rightarrow 0$ .

The map $\varphi :N\rightarrow M$ above in an intuitively very meaningful
way generalizes two special cases of homomorphisms: namely, inclusion and
canonical epimorphism. Such a generalization is only conceivable in the
frame of a virtual category; we shall call them "canonical homomorphisms": 

\begin{definition}
Given two virtual sections  $M$ and $N$ of a module $K$ and a third one, say 
$S$, that is embedded in $M$ and on which we, on the other hand, have a
canonical epimorphism $N\twoheadrightarrow S$, we call their composition $%
N\twoheadrightarrow S\hookrightarrow M$ \textbf{a canonical homomorphism
(over }$S$\textbf{). }We may as well define the \textbf{"strictly virtual
category"}, in which only canonical homomorphisms are allowed as morphisms.
\end{definition}

\bigskip It is obvious from the definition that inclusions and canonical
epimorphisms are just special cases of canonical homomorphisms.

\begin{lemma}
If $L_{1}$ and $L_{2}$ are visible summands of some radical (/socle) series
sections, of Loewy length $l_{1}$ and $l_{2}$ respectively,\ with only one
(visible) common section at their top, i.e. a common factor $T$ (not
necessarily simple!), then the pullback of the canonical epimorphisms $%
L_{1}\rightarrow T$, $L_{2}\rightarrow T$ is visible too.

More generally, if we have canonical epimorphisms  $L_{i}\rightarrow T$,\ $%
i=1,...,s$, with all $L_{i}$ and $T$ visible, then they determine uniquely a
section, that precisely overcoats them.
\end{lemma}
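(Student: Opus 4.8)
The plan is to reduce the general statement ($s$ epimorphisms) to the two-term case and then to iterate, so I will focus first on producing the iterated pullback of $L_1\twoheadrightarrow T\twoheadleftarrow L_2$ inside the virtual category $\hat W_K$. Work inside a single module $K$ of which $L_1$, $L_2$ and $T$ are visible virtual sections; by the identification of pairs of type q and type s we may assume $T$ is simultaneously a visible factor module $L_i/\operatorname{rad}^{l_i-h}L_i$ at the top of each $L_i$ (here $h$ is the Loewy length of $T$), realized by connected components of the appropriate radical section of the diagram. The candidate object is $\Lambda:=L_1\times_T L_2$, the fibre product of the two canonical epimorphisms; set-theoretically (i.e. in $\hat W_K$) this is $\{(x_1,x_2): \bar x_1=\bar x_2 \text{ in } T\}$, which sits in the virtual exact sequence $0\to \ker(L_1\to T)\oplus\ker(L_2\to T)\to \Lambda\to T\to 0$. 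I would first check that $\Lambda$ is again a virtual section of $K$: since $L_1$ and $L_2$ are visible, their kernels $\ker(L_i\to T)$ are the submodules $\operatorname{rad}^{l_i-h}L_i$, visible and realizable, and $\Lambda$ is built from $L_1\oplus L_2$ by the type-s / confined-preimage identification exactly as the module $\Lambda=\widetilde\pi_1^{-1}(M)$ was produced in the proof of Lemma~18; the same confined-preimage bookkeeping (intersection with the bottom radical power matching the direct-sum decomposition of that radical power) shows $\Lambda$ is a direct summand of the visible section it sits in, hence visible.

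Next I would verify the overcoating claim: $\Lambda$ precisely overcoats both $L_1$ and $L_2$, and is minimal with this property. The two projections $\Lambda\to L_i$ are canonical homomorphisms (in the sense of Definition~19), surjective, with kernels the $\ker(L_{3-i}\to T)$ sitting in disjoint radical layers at the bottom; in any diagrammatic illustration of $K$ in which $\Lambda$ becomes visible, each $L_i$ is depicted by a subdiagram of the diagram of $\Lambda$ (namely the quotient by the visible complementary kernel), which is exactly the relation $L_i\le\Lambda$. Minimality — i.e. that $\Lambda$ is the section that \emph{precisely} overcoats them, not a strictly larger one — follows from the universal property of the pullback transported into $\hat W_K$: any visible section $\Lambda'$ that overcoats both $L_1$ and $L_2$ compatibly over their common top $T$ admits a canonical homomorphism to each $L_i$ agreeing on $T$, hence a unique map $\Lambda'\to\Lambda$, and a dominated pillar cannot properly dominate its dominator, so $\Lambda'\ge\Lambda$; combined with visibility of $\Lambda$ this forces the "precisely overcoats" conclusion.

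For the general statement with $s$ epimorphisms $L_i\twoheadrightarrow T$, I would form the iterated fibre product $\Lambda_s:=L_1\times_T L_2\times_T\cdots\times_T L_s$ and argue by induction on $s$: having built a visible $\Lambda_{s-1}$ that precisely overcoats $L_1,\dots,L_{s-1}$ and still carries a canonical epimorphism $\Lambda_{s-1}\twoheadrightarrow T$, apply the two-term case to $\Lambda_{s-1}\twoheadrightarrow T\twoheadleftarrow L_s$. Uniqueness of the resulting section is again the pullback universal property in $\hat W_K$, so the iterated construction does not depend on the order in which the $L_i$ are amalgamated. The main obstacle I expect is the bookkeeping that keeps $\Lambda$ (and each intermediate $\Lambda_j$) an honest \emph{visible} virtual section rather than merely an abstract pullback: one must make the confined-preimage choice of decomposition of $L_1\oplus\cdots\oplus L_s$ compatibly at every radical layer — this is precisely the "vertical priority" phenomenon — so that the kernels split off as genuine direct summands of the relevant radical powers of $K$; everything else is a transcription of the fibre-product universal property into the exact category $\hat W_K$, which is routine once that compatibility is in place.
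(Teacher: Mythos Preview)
Your abstract fibre-product route misses the key simplification and leaves a real gap. The paper does not build $L_1\times_T L_2$ externally and then try to import it into $\hat W_K$; it observes that when $l_1=l_2$ both $L_1$ and $L_2$ are already submodules of the \emph{same} radical section $R=\operatorname{rad}^iK/\operatorname{rad}^{i+l_1}K$, so their sum $L_1+L_2\subseteq R$ is automatically a visible section. The hypothesis ``only one common section at their top'' says exactly that $\ker(L_1\to T)\cap\ker(L_2\to T)=0$ inside $R$, whence $L_1+L_2$ has kernel $\ker_1\oplus\ker_2$ over $T$ and therefore \emph{is} the pullback. Visibility is free; nothing has to be checked. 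For $l_1\neq l_2$ the sum makes no sense (the ambient sections differ), so the paper truncates the longer $L_2$ down to length $l_1$, applies the equal-length case to get a visible $L$, and then recovers the full pullback as the cup product (Yoneda splice) of $L$ and $L_2$ over the truncation---this is where, and only where, the confined-preimage technique of the previous lemma is invoked, because now one genuinely has a common \emph{middle} section.

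Your argument, by contrast, never exhibits $\Lambda$ as a concrete subquotient of $K$. The sentence ``$\Lambda$ is built from $L_1\oplus L_2$ by the type-s / confined-preimage identification exactly as in Lemma~18'' is the gap: Lemma~18 treats two pillars sharing a middle section $S$ (one sees $S$ as its bottom, the other as its top), and the confined preimage there produces a pillar overcoating both vertically. Here the overlap is at the top, the geometry is different, and the direct transcription you suggest does not go through. Your further claim that the two kernels ``sit in disjoint radical layers at the bottom'' is false in the principal case $l_1=l_2$: they occupy the same layers. Once you replace the abstract $\Lambda$ by the sum $L_1+L_2$ in the equal-length case and reserve the Lemma~18 technique for the extension step after truncation, the rest of your outline (universal property for uniqueness, induction on $s$) is fine.
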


\begin{proof}
\bigskip Assume first $l_{1}$=$l_{2}$; then their sum $L_{1}+L_{2}$, making
sense inside the least radical series section that overcoats them, is
actually the pullback. If $l_{1}$\TEXTsymbol{<}$l_{2}$,\ amputate $L_{2}$ to
get to the first case and then use the tachnic of the previous proof to get
the section that precisely overcoats $L_{1}$ and $L_{2}$.
\end{proof}

\bigskip Observe that in case $l_{1}\neq l_{2}$,\ we cannot make any sense
of the sum $L_{1}+L_{2}$ in the given virtual category, therefore we
amputate the longest (say, $L_{2}$) down to, say, $L_{0}$, make use of the
first case of the proof of this Lemma to get a "virtual pullback" $L$ and,
subsequently, get the ("canonical", while all maps involved are so) cup
product over $L_{0}$ of $L$ and $L_{2}$. That is actually the pullback of
the canonical epimorphisms $L_{1}\rightarrow T$, $L_{2}\rightarrow T$. 

By dualizing the arguments, we get the dual of the above:

\begin{lemma}
If $L_{1}$ and $L_{2}$ are visible summands of some radical (/socle) series
sections, of Loewy length $l_{1}$ and $l_{2}$ respectively,\ with only one
(visible) common section at their bottom, i.e. a common virtual submodule $T$%
, then the pushout of the embeddings $T\rightarrow L_{1}$, $T\rightarrow
L_{2}$ is visible too.

More generally, if we have embeddings $T\rightarrow L_{i}$,\ $i=1,...,s$,
with all $L_{i}^{{}}$'s and $T$ visible, then they determine uniquely a
section, that precisely overcoats them.
\end{lemma}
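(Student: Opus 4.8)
The plan is to obtain the statement from the previous lemma (Lemma 19, the "pullback" version) by dualizing everything, exactly as the text announces. Concretely, I would pass to the $k$-linear dual $M \mapsto M^{\ast}$, which on a Frobenius $k$-algebra $A$ is a contravariant self-equivalence of the module category (or, for rational $G$-modules, the contravariant duality exchanging Weyl modules with dual Weyl modules). This functor interchanges the radical series of $M$ with the socle series of $M^{\ast}$, it interchanges submodules with quotient modules, and hence it interchanges embeddings with canonical epimorphisms; crucially, it is compatible with the identifications of types q, s, and $S$ defining the virtual category, so it induces a (contravariant) equivalence $\hat{W}_{K} \to \hat{W}_{K^{\ast}}$ that carries visible virtual sections to visible virtual sections.

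First I would check that this duality sends the configuration in Lemma 21 to the configuration of Lemma 19: if $T \hookrightarrow L_{i}$ are embeddings of visible summands of radical/socle sections with $T$ their unique common submodule at the bottom, then $L_{i}^{\ast} \twoheadrightarrow T^{\ast}$ are canonical epimorphisms of visible summands with $T^{\ast}$ their unique common quotient at the top, and the Loewy lengths $l_{i}$ are preserved. Next I would invoke Lemma 19 in $\hat{W}_{K^{\ast}}$ to produce the pullback of the $L_{i}^{\ast} \twoheadrightarrow T^{\ast}$ — equivalently, the uniquely determined visible section that precisely overcoats the $L_{i}^{\ast}$. Finally I would apply $(\,\cdot\,)^{\ast}$ once more: since duality turns pullbacks into pushouts and "overcoats" into its order-reversed analogue (the overcoating module becomes the section that is precisely overcoated, i.e. the pushout), transporting the section produced by Lemma 19 back along $(\,\cdot\,)^{\ast}$ gives exactly the pushout of the embeddings $T \to L_{i}$, and it is visible because duality preserves visibility. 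This proves both the two-term assertion and the general $s$-term assertion simultaneously.

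The step I expect to be the main obstacle — really the only non-bookkeeping point — is justifying that $k$-linear duality genuinely descends to a functor on virtual categories that preserves visibility, i.e. that it respects the type-q, type-s, and especially the type-$S$ (confined preimage) identifications. The type-q pairs $\{A/B,\ (A/C)/(B/C)\}$ and the type-s pairs $\{(\bigoplus M_{i})/(\bigoplus S_{i}),\ \bigoplus M_{i}/S_{i}\}$ dualize to the analogous canonical isomorphisms among submodules of duals, so those are routine; the delicate one is the confined preimage $\widetilde{\varpi}^{-1}(N)$, whose definition singled out the minimal index set $J$ with $\sigma(N) \subseteq \bigoplus_{j\in J} M_{j}/S_{j}$. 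Under duality this minimality-of-a-quotient condition turns into the dual minimality condition defining a "confined pushforward/image" along an inclusion into a direct sum, and one must verify that the two match up — that the minimal $J$ is self-dual. I would handle this by noting that $J$ is characterized intrinsically (it is the support of $\sigma(N)$ in the sense that $p_{J}\circ\sigma$ restricted to $N$ is injective with $J$ minimal), and that this support is manifestly preserved by an additive self-equivalence that fixes the direct-sum decomposition indexing; hence duality carries confined preimages to their dual analogues and visibility of a subsection — defined purely in terms of which simple virtual constituents it contains — is preserved. Once this compatibility is in hand, the rest of the argument is the formal dualization sketched above, and no new module-theoretic input beyond Lemma 19 is needed.
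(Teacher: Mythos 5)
Your proposal is correct and follows essentially the same route the paper takes: the paper disposes of this lemma in one line, ``By dualizing the arguments, we get the dual of the above,'' and your argument is a careful unwinding of exactly that dualization, invoking Lemma 19 on $K^{\ast}$ and transporting back. The only thing you add beyond what the paper records is the (worthwhile) sanity check that $k$-linear duality respects the type-q, type-s, and type-$S$ identifications and hence descends to the virtual category preserving visibility, which the paper leaves implicit.
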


\bigskip 

\begin{proposition}
We may to every finitely generated $A$-module $M$ attach a tuned radical
diagram.
\end{proposition}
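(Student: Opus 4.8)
The plan is to construct the tuned radical diagram layer by layer, proceeding downward from the head of $M$, using the structural lemmas just established (Lemmas 16, 18 and 19) together with Theorem 12 on the linear independence of the extension classes attached to the virtual irreducibles of the second radical layer. First I would reduce to the indecomposable case: write $M=\bigoplus_\alpha M_\alpha$ into indecomposables; a tuned diagram of $M$ will be the disjoint union of tuned diagrams of the $M_\alpha$, since the defining condition on $\{i,j\}$-pillars is compatible with taking direct sums (any pillar of a radical section of $M$ is, up to the colonnade it lies in, a sum of pillars coming from the individual summands). So assume $M$ is indecomposable with radical length $\kappa+1$.

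\textbf{The inductive construction.}
I would work by induction on the radical length. For the initial step, attach a single vertex to $Hd(M)$, and join it by upward-oriented edges to vertices for the virtual irreducibles of $rad\,M/rad^2M$, each edge decorated with the corresponding class in $Ext_A^1$; by Theorem 12 these classes are $k$-linearly independent within each isomorphism type, which is exactly the data needed to make the second-layer colonnades well defined and to fix their specifications "from the top". The inductive step is where vertical priority enters: having built a tuned diagram for the maximal dominating pillars of the top radical sections, I would, for each lower colonnade, choose its specification to be subjugated to the specifications already fixed for the pillars that overcoat it — this is legitimate precisely because Lemma 16 guarantees that two maximally dominating pillars of a radical section have no (virtually) common subsection, so the overcoating pillars partition the ambient section and their individually chosen specifications do not conflict. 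Lemmas 18 and 19 then supply, for the places where several visible pillars share a common top factor (resp. common bottom subobject), the pullback (resp. pushout) section that precisely overcoats them, which is the subdiagram one glues in to realize a pillar extending across several layers. Iterating down to $rad^\kappa M$ produces a virtual diagram all of whose radical sections have, as connected components, the maximally dominating pillars of the corresponding radical section of $M$, with every other indecomposable summand of that section either visible or a submodule of a direct sum of visible isomorphic ones — i.e. a tuned diagram.

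\textbf{The main obstacle.}
The hard part will be showing that the downward choices can be made \emph{globally consistently} — that the "vertical priority" prescription actually terminates in a coherent diagram rather than forcing, at some deep layer, a respecification that invalidates a choice made higher up. The danger is a colonnade dominated by two different higher colonnades whose already-fixed specifications pull it in incompatible directions; this is exactly the scenario Lemma 16 is designed to exclude, so the crux of the argument is to leverage that lemma (and its proof technique via confined preimages) to show that the poset $(\Pi,\sqsubseteq)$ of colonnades is "tree-like enough" that a specification chosen along each maximal chain from the top extends the ones chosen above it. I would make this precise by choosing, at each stage, specifications of the maximal colonnades first and then arguing by induction on the height of a pillar in $(\Pi,\sqsubseteq)$ that the confined-preimage construction of Lemma 16 yields a unique overcoating section compatible with all the overcoats simultaneously — the uniqueness clauses in the "more generally" parts of Lemmas 18 and 19 being what forces compatibility. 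Once coherence is established, verifying that the resulting graph meets the Definition of "tuned" is a matter of unwinding definitions and causes no further difficulty.
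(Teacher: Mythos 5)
Your proposal follows essentially the same route as the paper's own sketch: reduce to the indecomposable case, invoke the non-overlap lemma for maximally dominating pillars (Lemma 16), use the pullback/pushout lemmas (Lemmas 18--19) to glue overcoating sections, and appeal to the extension-class linear independence of Theorem 12 to decorate edges. The one place where you deviate, and where I would push back, is the organizing principle of the construction. You frame it as an induction on radical length proceeding downward from $Hd(M)$, layer by layer. But ``vertical priority'' in the paper's sense is \emph{not} top-to-bottom: it means the \emph{broadest} radical sections $rad^{i}M/rad^{j}M$ --- equivalently the longest pillars, which may sit entirely in middle layers --- are specified first, and narrower sections are subjugated to them. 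The paper operationalizes this by forming, for each $\sqsubseteq$-maximal $\{i,j\}$-colonnade, its maximally dominating $\{i-1,j+1\}$-``pillar brick'' overcoat, showing bricks can overlap only on endlayers, and then recursing \emph{inside} each brick. A head-first layer induction can end up specifying a short top pillar before a long middle pillar that ultimately needs to dominate it, which is exactly the consistency failure you identify as the ``main obstacle.'' You do correct course in that final paragraph by switching to induction on height in $(\Pi,\sqsubseteq)$ with maximal colonnades first, which is the right invariant and matches the paper's intent --- so I'd advise dropping the radical-length induction framing entirely and leading with the colonnade-ordering induction instead, since the former adds a false start and the latter is what actually drives the argument. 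It is also worth noting that the paper itself marks this as a \emph{sketch of proof}, and neither version fully discharges the global coherence step; both rely on Lemma 16 plus the uniqueness clauses of Lemmas 18--19 to make the brick (or overcoat) assignments well defined, which is the genuine technical content here.
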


\begin{proof}
\ \ \textit{SKETCH OF PROOF:}

\bigskip Clearly, it suffices to show it for an indecomposable module $M$.

We proceed to construct such a diagram:

Take all peaks and all maximal colonnades. Prove that they cannot overlap.
Investigate possible differentiation of some realizations of maximal
colonnades, prefer the ones that highlight them. "VERTICAL PRIORITY".

Consider then for any "$\sqsubseteq $"-maximal $\left\{ i,j\right\} $%
-colonnade the "$\leq $"-maximal (:maximally dominating) indecomposable $%
\left\{ i-1,j+1\right\} $-overcoat (where, if either $i-1$ or $j+1$ layer
non-existing, take $i$, resp. $j$ instead) precisely overcoating the
colonnade on the respective radical (socle) series section, call them "%
\textbf{pillar bricks}"; it follows easily that they can only overlap on
their endlayers, same or opposite direction (use maximality of colonnades).

Our construction ensures that we do not mess with pillar-realizations,
ensuring "vertical priority", in the sense that the "broader" (radical)
section consideration comes before the lesser, narrower ones in the
realization process.

Inside each brick, we consider the maximal indecomposables with one of the
(virtually determined) irreducibles of the head as their simple head. We
repeat the process in narrower radical sections inside the radical section
of each maximal pillar, as many times as possible: That means, repeating the
process inside the pillars of the maximal colonnades.

By taking all the indecomposables with simple head, we join the simple head
by an edge with all irreducibles on its next radical layer, while attaching
the proper extension-class to it.

Thus we get a diagram, which can easily be shown to be an "optimally tuned"
one, in the sense of "vertical priority".
\end{proof}

\bigskip \bigskip 

\begin{conclusion}
We achieve a final \textbf{diagram} out of a "radical" one in the following
way:
\end{conclusion}

Inside the virtual category of a given module $M$, we may then identify the
virtual irreducibles of the radical series as contents of the socle series
too, not necessarily at the same layers: The virtual irreducibles that are
"hanging loosely" from the preceding radical layer shall fall down to a
lower layer of the socle series. The key background property for this
situation is the existence in a radical section of height $s$ of a direct
summand of Loewy length smaller than $s$. These provide the missing edges of
the module diagram, by considering (dually now) the indecomposable sections
with a simple socle in the subsequent socle layer; those indecomposables are
the same as the ones we considered through the virtual radical series,
inside the categories of modules we are considering. Thus we get to the
important

\begin{theorem}
\bigskip We may to every finitely generated $A$-module $M$ attach a certain
type of a tuned diagram, to be called \textbf{a central diagram}. In
particular, there is always a tuned diagram.
\end{theorem}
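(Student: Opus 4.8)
The plan is to build the central diagram in two stages: first the radical skeleton, then the socle overlay, checking at each step that the tuning conditions are preserved, everything being carried out inside the virtual category $\hat{W}_{M}$. First I would reduce to $M$ indecomposable and invoke the preceding Proposition (existence of a tuned radical diagram), which furnishes, on every radical section $rad^{i}M/rad^{j}M$, a diagram whose connected components realize the $\{i,j\}$-pillars, the realizations of the colonnades being chosen according to vertical priority, with the pillar bricks organizing the construction. I take this diagram as given and keep working in $\hat{W}_{M}$, so that all identifications of type q, s and S are at hand and every equality below is meant virtually.

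Next I would introduce the socle data. Inside $\hat{W}_{M}$ the virtual irreducibles occurring in the radical layers are simultaneously contents of the socle series, generally at strictly lower layers: a virtual irreducible "hanging loosely" from its radical layer --- formally, one lying in a direct summand of a radical section whose Loewy length is strictly smaller than the height of that section --- descends to a deeper socle layer, and this is exactly what supplies the edges missing from the radical diagram. For each such configuration I add the corresponding edge by dualizing the radical construction: I take the indecomposable subsections with simple socle in each socle layer --- by Krull--Schmidt inside the categories under consideration these are the very indecomposables already met via the radical series --- join each simple socle by an oriented edge to the essential virtual irreducibles of the previous socle layer, and attach the dual extension class. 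The Lemma on visibility of pullbacks of canonical epimorphisms and its dual on pushouts of embeddings guarantee that the relevant sections are visible, so that the added edges are well defined on the diagram.

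It remains to check that the resulting diagram is tuned for the radical and the socle filtrations at once; this is the content of the word \emph{central}. Here the pillar bricks do double duty. By the Lemma forbidding a common virtual section to two maximally dominating pillars, on each radical section the $\sqsubseteq$-maximal colonnades occupy pairwise disjoint loci; and the same holds on the socle side, because passing from the radical to the socle series preserves the Loewy lengths of $M$, of its indecomposable summands, and of their common sections (as noted in the proof of that Lemma, nothing can "fall down" past the length forced by indecomposability). Hence the realization choices forced from above by vertical priority agree with those forced from below on every overlap, there is no conflict, and the diagram simultaneously displays the longest pillars of every radical section and every socle section as connected components. The concluding assertion, that a tuned diagram always exists, is then immediate, since a central diagram is in particular tuned.

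The main obstacle I expect is precisely this last compatibility: showing that the vertically-prioritized realizations imposed by the radical filtration are never in tension with those imposed by the socle filtration. I anticipate reducing it, through the virtual category, to the observation (made after the Lemma on maximally dominating pillars) that a pillar brick is canonically the Yoneda splice over a common section of the two short extensions attached to that section, so that its radical picture and its socle picture are two faces of a single virtual object; carrying this out uniformly over all bricks, and arranging the induction on radical length so that narrower radical sections are realized strictly after broader ones, is where the real work lies.
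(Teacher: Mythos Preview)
Your proposal follows essentially the same route as the paper: start from the tuned radical diagram supplied by the preceding Proposition, identify within $\hat{W}_{M}$ the virtual irreducibles of the radical layers as contents of the socle series (with the ``loosely hanging'' ones descending), and supply the missing edges by the dual construction through indecomposable sections with simple socle, which in the categories at hand coincide with the indecomposables already obtained on the radical side. Your discussion of the radical/socle compatibility via pillar bricks and the Lemma on maximally dominating pillars is in fact more explicit than what the paper itself provides, but the underlying construction is the same.
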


Thinking of the virtual radical, resp. socle, series in terms of compatible
filtrations, we may read such filtrations either "downward", i.e. from the
bigger to the smaller (as in radical series), or upward (like the socle
series). In the former case, a step is non-split whenever the next term
looses a (virtual) irreducible of the next radical layer; dually in the
latter case, a step upward is non-split whenever the next term gains a
(virtual) irreducible of the next (:higher) socle layer.

\bigskip

\bigskip It is not difficult to prove that it is possible to make a diagram
that somehow highlights (: makes it visible) any particular section of the
module. A lot of relevant questions arise: Can we consider the totality of
tuned diagrams for a module? May some of them "lead" to all others - or to a
family of them?

\bigskip \bigskip The Theorem of existence of a tuned diagram can become a
powerful tool in many situations. In my following research I shall try to
illustrate this by some important examples.

It may for example become a powerful tool in many cases where we restrict
modules to subcategories/subgroups, also depending on the other information
we may have, in particular about the restriction of the Ext-functors and
other relevant specific knowledge.

We close this section by stressing that the present article has not yet
reached its final edition.

\section{\protect\bigskip An important field of application.}

\bigskip

\bigskip Here we are now going to outline our strategy for the determination
of virtual diagrams for the p.i.m.'s of the family of groups $SL(3,p)$, with 
$p>3$. That strategy may be pursued in the more general case of the above
mentioned  truncated category $C\left( \pi \right) $, closely related to the
representation theory of finite groups of Lie type. 

Particularly important in our considerations is the following result on
truncated categories (\cite[A.10, p. 393]{JCJ}): 

\begin{proposition}
Let $\pi \subset X\left( T\right) _{+}$ be saturated, $V$, $V^{\prime }$ be $%
G-$modules in $C\left( \pi \right) $. Then for all $i\geq 0$ there are
isomorphisms $Ext_{C\left( \pi \right) }^{i}\left( V,V^{\prime }\right)
\cong Ext_{G}^{i}\left( V,V^{\prime }\right) $.
\end{proposition}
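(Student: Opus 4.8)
The plan is to reduce the comparison to the vanishing of the higher derived functors of a truncation functor. For $i=0$ the claim is the tautology $Hom_{C(\pi)}(V,V')=Hom_G(V,V')$, since $C(\pi)$ is a full subcategory of the category $\mathcal{C}$ of rational $G$-modules. For $i=1$ I would argue on Yoneda classes: an extension $0\to V'\to E\to V\to 0$ of rational $G$-modules with $V,V'\in C(\pi)$ automatically has $E\in C(\pi)$, because the composition factors of $E$ are those of $V$ together with those of $V'$, and equivalence of extensions is tested identically in both categories; hence $Ext^1_{C(\pi)}(V,V')=Ext^1_G(V,V')$. Moreover an $i$-extension in $\mathcal{C}$ involves only finitely many modules, which all lie in $C(\pi')$ for a suitable finite saturated $\pi'\supseteq\pi$, so $Ext^i_G(V,V')=\varinjlim_{\pi'}Ext^i_{C(\pi')}(V,V')$; it therefore suffices, for $V,V'\in C(\pi)$ and any finite saturated $\pi'\supseteq\pi$, to prove $Ext^i_{C(\pi)}(V,V')\cong Ext^i_{C(\pi')}(V,V')$. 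This reduction is where saturatedness enters: it makes $C(\pi)$ and $C(\pi')$ equivalent to module categories of finite-dimensional (generalized Schur) algebras, so in particular they have enough injectives and finite-dimensional injective hulls.

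Next I would introduce the truncation functor $O_\pi\colon C(\pi')\to C(\pi)$ sending a module $M$ to $O_\pi M$, its largest submodule lying in $C(\pi)$ (the sum of all submodules whose composition factors have highest weight in $\pi$; this is well defined because $C(\pi)$ is closed under subobjects, quotients and finite sums). A one-line check gives that $O_\pi$ is right adjoint to the exact, fully faithful inclusion $\iota\colon C(\pi)\hookrightarrow C(\pi')$ — any morphism out of an object of $C(\pi)$ has image in $C(\pi)$, hence factors through $O_\pi$ of its target. Being a right adjoint, $O_\pi$ is left exact and sends injectives of $C(\pi')$ to injectives of $C(\pi)$. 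Now take a minimal injective resolution $0\to V'\to I^0\to I^1\to\cdots$ of $V'$ in $C(\pi')$ and apply $O_\pi$ to obtain a complex $0\to V'\to O_\pi I^0\to O_\pi I^1\to\cdots$ of injectives of $C(\pi)$. If this complex is exact — that is, a genuine injective resolution of $V'$ inside $C(\pi)$ — then $Ext^i_{C(\pi)}(V,V')$ is the cohomology of $Hom_{C(\pi)}(V,O_\pi I^\bullet)$, which by the adjunction equals $Hom_{C(\pi')}(V,I^\bullet)$, whose cohomology is $Ext^i_{C(\pi')}(V,V')$, and the proof is finished.

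The main obstacle is exactly this exactness of $O_\pi I^\bullet$, equivalently $R^{>0}O_\pi|_{C(\pi)}=0$. I would establish it by identifying $O_\pi I^\bullet$ with the minimal injective resolution of $V'$ in $C(\pi)$ through socle bookkeeping. Let $C^j\subseteq I^j$ be the $j$-th cosyzygy in $C(\pi')$, with $C^0=V'$ and $C^{j+1}=I^j/C^j$, so that $I^j$ is the injective hull of $C^j$ in $C(\pi')$ and hence $\operatorname{soc}I^j=\operatorname{soc}C^j$. Then $O_\pi C^j\subseteq O_\pi I^j$, and $O_\pi I^j$, being injective in $C(\pi)$ and containing $O_\pi C^j$, contains an injective hull of $O_\pi C^j$ in $C(\pi)$ as a direct summand; but any simple submodule of $I^j$ lies in $\operatorname{soc}C^j$, and it lies in $C(\pi)$ precisely when its highest weight is in $\pi$, in which case it lies in $O_\pi C^j$, so $\operatorname{soc}O_\pi I^j\subseteq O_\pi C^j$ and the complementary summand has zero socle, hence is $0$. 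Thus $O_\pi I^j$ is the injective hull of $O_\pi C^j$ in $C(\pi)$. A similar comparison, using extension-closedness of $C(\pi)$, identifies the cokernel of $V'\to O_\pi I^0$ with $O_\pi C^1$ and, inductively, the cokernel of $O_\pi I^{j-1}\to O_\pi I^j$ with $O_\pi C^{j+1}$; hence $0\to V'\to O_\pi I^0\to O_\pi I^1\to\cdots$ is the minimal injective resolution of $V'$ in $C(\pi)$, in particular exact. The $i=0,1$ cases above are the special instances $R^0O_\pi|_{C(\pi)}=\operatorname{id}$ and $R^1O_\pi|_{C(\pi)}=0$ of this, and the argument is in substance that of \cite[A.10]{JCJ}.
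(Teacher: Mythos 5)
This Proposition is not proved in the paper at all: it is quoted verbatim as \cite[A.10]{JCJ}, so there is no ``paper's own proof'' to compare against. Judged on its own terms, your argument is set up along the right lines (full subcategory, reduction to finite saturated $\pi'$, the right adjoint truncation functor $O_\pi$, the fact that $O_\pi$ is left exact and preserves injectives, and the socle bookkeeping showing that $O_\pi I^j$ is the injective hull of $O_\pi C^j$ in $C(\pi)$) --- all of that is correct and is indeed part of the standard machinery around Jantzen's A.10. But there is a genuine gap in the inductive step.

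The exactness of $0\to V'\to O_\pi I^0\to O_\pi I^1\to\cdots$ at position $j$ is equivalent to the surjectivity of $O_\pi I^{j-1}\to O_\pi C^j$, i.e.\ to $R^j O_\pi(V')=0$. Your extension-closedness argument genuinely proves this only for $j=1$: to pull a submodule $N\subseteq O_\pi C^1$ back into $O_\pi I^0$ you use that the preimage sits in an extension $0\to V'\to\widetilde N\to N\to 0$ with \emph{both} ends in $C(\pi)$. For $j\ge 2$ the analogous preimage sits in $0\to C^{j-1}\to\widetilde N\to N\to 0$, and $C^{j-1}$ is generally \emph{not} in $C(\pi)$, so extension-closedness says nothing; the sentence ``and, inductively, the cokernel of $O_\pi I^{j-1}\to O_\pi I^j$ with $O_\pi C^{j+1}$'' asserts exactly the higher vanishing $R^{\ge 2}O_\pi|_{C(\pi)}=0$ without proving it. This vanishing is not formal: for a finite-dimensional algebra that is \emph{not} quasi-hereditary for the given order (e.g.\ a uniserial projective with composition series $S_1,S_2,S_1$ from top to bottom, with $\pi=\{1\}$), the truncated category is semisimple while $\operatorname{Ext}^2$ in the big category is nonzero, so the analogue of the Proposition fails. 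What is actually needed, and what Jantzen's Appendix A supplies, is structural input special to highest weight categories: the injective objects of $C(\pi)$ have good filtrations by the $H^0(\mu)$, $\mu\in\pi$, and one first proves $R^{>0}O_\pi H^0(\mu)=0$ for $\mu\in\pi$, then passes to general $V'\in C(\pi)$ by a filtration/d\'evissage argument. Without that ingredient the proof is incomplete from $i=2$ onwards.
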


Even more important for our goals is the following Proposition, which was
already proved in \cite[Theorem 7.4]{CPSK}, but of which Henning Haahr
Andersen in \cite[Proposition 2.7]{HHA} gave a new elegant and
self-contained proof:

\begin{proposition}
Let $G$ be an almost simple algebraic group over an algebraically closed
field $k$\ of positive characteristic $p$, and assume that $G$\ is defined
and split over the prime field $\digamma _{p}$\bigskip ; we denote as $%
G\left( n\right) $ the finite group consisting of the points of $G$ over the
field with $p^{n}$ elements. \bigskip Then \ for $\lambda $,$\mu \in
X_{n}\left( T\right) $, the restriction map $Ext_{G}^{1}\left( L\left( \mu
\right) ,L\left( \lambda \right) \right) \longrightarrow Ext_{G\left(
n\right) }^{1}\left( L\left( \mu \right) ,L\left( \lambda \right) \right) $
is injective.
\end{proposition}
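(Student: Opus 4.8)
The plan is to translate the statement into group cohomology, exploit that the $n$-th Frobenius acts trivially on $G(n)$, and then combine a Frobenius-twist injectivity with a generic-cohomology comparison in degree $1$. First I would rewrite $\mathrm{Ext}^1_G(L(\mu),L(\lambda))\cong H^1(G,N)$ and $\mathrm{Ext}^1_{G(n)}(L(\mu),L(\lambda))\cong H^1(G(n),N)$ with $N:=L(\lambda)\otimes L(\mu)^{\ast}=L(\lambda)\otimes L(-w_0\mu)$, the map in question being restriction along $G(n)\hookrightarrow G$. By the Steinberg restriction theorem the simples $L(\lambda),L(\mu)$ with $\lambda,\mu\in X_n(T)$ stay irreducible on $G(n)$ and exhaust the simple $kG(n)$-modules, so the two sides genuinely ``see the same modules'' and a nonzero class is an honest non-split extension $0\to L(\lambda)\to E\to L(\mu)\to 0$ that must be shown to remain non-split on $G(n)$. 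Since $\lambda$ and $-w_0\mu$ lie in $X_n(T)$, every weight $\nu$ of $N$ satisfies $\nu\le\lambda-w_0\mu$, hence $\langle\nu,\alpha^{\vee}\rangle<2(p^n-1)$ for all $\alpha>0$; this bound is the sole use of the restrictedness hypothesis.

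Next I would run a three-step comparison. (i) $F^n$ restricts to the identity on $G(n)$, so $N^{(n)}|_{G(n)}\cong N|_{G(n)}$ and therefore $H^1(G(n),N)\cong H^1(G(n),N^{(n)})$, compatibly with the restriction maps out of $G$. (ii) The Frobenius-twist map on rational cohomology $H^1(G,N)\to H^1(G,N^{(n)})$ is injective. (iii) The restriction map $H^1(G,N^{(n)})\to H^1(G(n),N^{(n)})$ is an isomorphism, because $n$ is large enough relative to the cohomological degree $1$ and the weight bound on $N$ --- this is the Cline--Parshall--Scott--van der Kallen ``generic cohomology'' comparison, and it is precisely here that $\langle\nu,\alpha^{\vee}\rangle<2(p^n-1)$ is needed. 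Inserting (i), (ii), (iii) into the resulting commuting square identifies the restriction map $H^1(G,N)\to H^1(G(n),N)$ with the composite of an injection and an isomorphism, hence it is injective.

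The step I expect to be the real obstacle is (iii): proving that in degree $1$, for $N$ with weights bounded by $\lambda-w_0\mu$, the rational cohomology $H^1(G,N^{(n)})$ already coincides with the finite-group cohomology $H^1(G(n),N^{(n)})$ --- equivalently, that no ``extra'' extension classes for $G(n)$ appear beyond those restricted from $G$. This is the technical heart of Cline--Parshall--Scott--van der Kallen's Theorem~7.4 and of Andersen's streamlined reproof, and for larger (non-restricted) weights the statement genuinely fails, which is why the hypothesis $\lambda,\mu\in X_n(T)$ cannot be dropped. A more self-contained alternative, staying closer to the Proposition proved above relating $\mathrm{Ext}^1_G(L(\mu),L(\lambda))$ to $\mathrm{Hom}_G\big(L(\mu),\mathrm{soc}^2H^0(\lambda)/\mathrm{soc}\,H^0(\lambda)\big)$, is to realize a nonzero class as a $G$-embedding $L(\mu)\hookrightarrow \mathrm{soc}^2H^0(\lambda)/\mathrm{soc}\,H^0(\lambda)$ into the second socle layer of the dual Weyl module, note that it restricts to a nonzero $G(n)$-map since $L(\mu)|_{G(n)}$ is irreducible, and then check that over $G(n)$ the socle of $H^0(\lambda)$ is still exactly $L(\lambda)$ --- so that $L(\mu)$ does not drop into $\mathrm{soc}_{G(n)}H^0(\lambda)$ and the class survives. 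This last verification again rests on the same restricted-weight estimate. All remaining reductions --- to indecomposable $E$, the $\mathrm{Ext}^1$-to-$H^1$ dictionary, and Steinberg restriction --- are routine.
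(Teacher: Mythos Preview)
The paper does not give its own proof of this proposition: it simply quotes the result, attributing it to Cline--Parshall--Scott--van der Kallen \cite[Theorem~7.4]{CPSK} and to Andersen's reproof \cite[Proposition~2.7]{HHA}. Your main sketch is precisely the CPSK route (rewrite as $H^1$, Frobenius-twist injectivity, then the generic-cohomology comparison), so in that sense your proposal matches one of the two proofs the paper is pointing to; there is nothing further to compare against in the paper itself.

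Two remarks on the details. First, in step~(iii) you assert that $H^1(G,N^{(n)})\to H^1(G(n),N^{(n)})$ is an \emph{isomorphism}; what CPSK actually give for this particular $n$ (tied to the weight bound $\langle\nu,\alpha^\vee\rangle<2(p^n-1)$) is injectivity, which is all you need for the argument. Claiming surjectivity here is stronger than what is available in general at this specific level of twist. Second, your ``more self-contained alternative'' is indeed in the spirit of Andersen's argument and of the paper's subsequent Proposition (which extracts $\mathrm{Hom}_{G(n)}(L(\mu),E)=0$ from Andersen's proof), but the step you flag --- ``check that over $G(n)$ the socle of $H^0(\lambda)$ is still exactly $L(\lambda)$'' --- is false as stated: $\mathrm{soc}_{G(n)}H^0(\lambda)$ is typically larger than $L(\lambda)$. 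Andersen does not prove this; rather, by a careful choice of which of $\lambda,\mu$ is maximal and a direct analysis of $\mathrm{Hom}_{G(n)}(L(\mu),E)$ via Frobenius reciprocity and weight estimates, he shows that the specific $L(\mu)$ in question cannot map nontrivially into $E$ over $G(n)$. So your alternative points in the right direction but would need that correction to go through.
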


\bigskip However, by going through that proof we can confirm that, with a
slight adaptation, we deduce the following stronger version, about which it
has though to be remarked that it only makes full sense in the frame of the
virtual category of the $G-$module $E$ (and its restriction down to $G\left(
n\right) $):

\begin{proposition}
Same notation as above, except that we now allow for any $\lambda $,$\mu \in
X\left( T\right) $, instead. Let $0\rightarrow L\left( \lambda \right)
\rightarrow E\rightarrow L\left( \mu \right) \rightarrow 0$ be a non-split $%
G-$extension. Then is $Hom_{G(n)}(L(%
%TCIMACRO{\U{3bc} }%
%BeginExpansion
\mu
%EndExpansion
),E)=0$, which implies that no simple composition factors of $E$ as a $%
G\left( n\right) -$module, that stem from the restriction of $L\left( \mu
\right) $, can be in the socle. This has to be understood in the context of
the virtual socle series of the $G-$module $E$ and its restriction down to $%
G\left( n\right) $.  
\end{proposition}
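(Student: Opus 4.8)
The plan is to retrace the proof of the preceding Proposition (injectivity of $Ext_{G}^{1}(L(\mu),L(\lambda))\to Ext_{G(n)}^{1}(L(\mu),L(\lambda))$ for $\lambda,\mu\in X_{n}(T)$, \cite[Proposition 2.7]{HHA}) and to locate the one genuinely weight-restrictive ingredient in it. First I would dispose of the case $\lambda=\mu$: self-extensions of simple rational $G$-modules vanish, so there is then no non-split $G$-extension and the assertion is vacuous; hence I may assume $\lambda\neq\mu$, so that $E$ is an indecomposable $G$-module with socle $L(\lambda)$ and head $L(\mu)$.

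Next I would fix the dictionary between the two formulations. Applying $Hom_{G(n)}(L(\mu),-)$ to $0\to L(\lambda)\to E\to L(\mu)\to 0$ gives the exact sequence $0\to Hom_{G(n)}(L(\mu),L(\lambda))\to Hom_{G(n)}(L(\mu),E)\to Hom_{G(n)}(L(\mu),L(\mu))\xrightarrow{\delta}Ext_{G(n)}^{1}(L(\mu),L(\lambda))$, with $\delta(\mathrm{id})=[E|_{G(n)}]$. For $\lambda,\mu$ restricted, Steinberg's tensor product theorem makes $L(\lambda)|_{G(n)}$ and $L(\mu)|_{G(n)}$ non-isomorphic simple $G(n)$-modules, so the first term is $0$, the third is $k$ (Schur), and "$Hom_{G(n)}(L(\mu),E)=0$" $\iff$ "$[E|_{G(n)}]\neq0$" $\iff$ "$E|_{G(n)}$ non-split" --- which is exactly what \cite[Proposition 2.7]{HHA} delivers from the non-splitness of $[E]$ over $G$. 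So in the restricted case there is nothing to do beyond this translation.

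Then comes the adaptation. For general dominant $\lambda,\mu$, the modules $L(\lambda)|_{G(n)}$ and $L(\mu)|_{G(n)}$ need no longer be simple and may share constituents, so "non-split over $G(n)$" is no longer the right target and one must establish $Hom_{G(n)}(L(\mu),E)=0$ directly. I would examine the proof of \cite[Proposition 2.7]{HHA} --- which bridges $G$ and $G(n)$ through the Frobenius kernel $G_{n}$ (Lyndon--Hochschild--Serre for $G_{n}\triangleleft G$, then the comparison of the infinitesimal $G_{n}$ with the finite $G(n)$) --- and check that the sole place $p^{n}$-restrictedness enters is to force vanishing of the relevant $Hom_{G_{n}}$ (respectively $Hom_{G(n)}$) between the two simples; everything else is formal. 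To replace this I would work one $G(n)$-constituent at a time: for a simple $G(n)$-module $D$ in $\mathrm{soc}(E|_{G(n)})$, either $D$ is not a constituent of $L(\mu)|_{G(n)}$ --- then $Hom_{G(n)}(D,E)=Hom_{G(n)}(D,L(\lambda))$ and $D$ owes nothing to $L(\mu)$ --- or $D$ is such a constituent, and then I would pass to the virtual socle series of $E|_{G(n)}$ to isolate the copy of $D$ bearing the $L(\mu)$-label and rerun the $G_{n}$-to-$G(n)$ comparison for it, reducing via $L(\nu)^{[n]}|_{G(n)}\cong L(\nu)|_{G(n)}$ (because $F^{n}$ is the identity on $G(n)=G^{F^{n}}$) together with Steinberg's theorem to the already-settled restricted case. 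Summing the contributions gives $Hom_{G(n)}(L(\mu),E)=0$, which, read off in the virtual socle series, is precisely the asserted statement that no constituent of $E|_{G(n)}$ stemming from $L(\mu)$ reaches the socle.

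I expect the main obstacle to be exactly this breakdown, for non-restricted weights, of the dichotomy that trivialises the restricted case: the $G(n)$-constituents of $L(\lambda)$ and of $L(\mu)$ can coincide, so a bare count of composition factors cannot decide which copy of a shared simple occupies the socle, and "$Hom_{G(n)}(L(\mu),E)\neq0$" is no longer synonymous with "$E|_{G(n)}$ splits". This is just what the virtual socle series of Section 3 is designed for: it attaches to each constituent of $E|_{G(n)}$ a set-theoretic identity recording its provenance, and only against that bookkeeping do the hypothesis, the conclusion, and the phrase "stem from $L(\mu)$" become well posed --- the very caveat the statement itself records.
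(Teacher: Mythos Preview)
The paper gives no proof of this proposition: it only asserts that ``by going through that proof [of \cite[Prop.\ 2.7]{HHA}] we can confirm that, with a slight adaptation, we deduce the following stronger version.'' Your proposal operates at exactly the same level---you too propose to retrace Andersen's argument and relax the restrictedness hypothesis---so at the depth the paper itself works there is nothing to compare.

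Where your sketch goes further than the paper, however, it acquires a gap. In your adaptation you argue constituent-by-constituent on $\mathrm{soc}(E|_{G(n)})$, aiming to show that no simple there ``bears the $L(\mu)$-label,'' and then claim that ``summing the contributions gives $Hom_{G(n)}(L(\mu),E)=0$.'' This implication does not hold. A nonzero $G(n)$-map $f\colon L(\mu)\to E$ may have image contained entirely in the submodule $L(\lambda)\subseteq E$; this happens precisely when $Hom_{G(n)}(L(\mu),L(\lambda))\neq 0$, and for unrestricted weights nothing in your outline rules it out---after untwisting via $L(\nu)^{[n]}|_{G(n)}\cong L(\nu)|_{G(n)}$, both $L(\lambda)|_{G(n)}$ and $L(\mu)|_{G(n)}$ are genuine tensor products which may well share simple quotients and submodules. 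In such a case every simple in $\mathrm{soc}(E|_{G(n)})$ can virtually stem from $L(\lambda)$ while $Hom_{G(n)}(L(\mu),E)$ is still nonzero. Note also that the paper runs the implication the other way (from $Hom_{G(n)}(L(\mu),E)=0$ to the socle statement), so your strategy inverts the intended logical flow. A genuine adaptation must take place inside Andersen's actual argument---at the $G_{n}$-step and the passage from $G_{n}$ to $G(n)$---and yield $Hom_{G(n)}(L(\mu),E)=0$ directly, rather than attempting to recover it from virtual socle bookkeeping on the restriction.
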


\bigskip 

This last propostition is going to be very important in our pursuit of
diagrams for modular representations of finite groups of Lie type, as it is
giving us a kind of thread coming by restriction from the diagrams for
representations (typically the Weyl modules or their duals) algebraic groups
down to the groups $G\left( n\right) $. We are then in many cases able to
get some "good filtrations" of tensor products of suitable simple modules
with the Steinberg module $St_{n}=H^{0}\left( \left( p^{n}-1\right) \rho
\right) $, of which the corresponding $G_{n}-$injective hull $Q_{n}\left(
\lambda \right) $ is proven to be a direct summand, by means of the well
known $G_{n}-$ (and $G\left( n\right) -$)injectivity of $St_{n}$, and also
of the fact that $Q_{n}\left( \lambda \right) $ is known to have a $G-$%
module structure for all $\lambda \in X_{n}\left( T\right) $ for all $p\geq
2\left( h-1\right) $, where $h$\ is the Coxeter\ number. 

More precisely, we see that $Hom_{G}(L(\lambda ),St_{n}\otimes L(\left(
p^{n}-1\right) \rho -\lambda ^{\ast }))\cong Hom_{G}(L(\lambda )\otimes
L(\left( p^{n}-1\right) \rho -\lambda ),St_{n})\cong k$, which by virtue of
the $G-$structure of $Q_{n}\left( \lambda \right) $\ and the $G_{n}-$%
injectivity of $St_{n}$, yields that $Q_{n}\left( \lambda \right) $ is a
(single) direct summand of $St_{n}\otimes L(\left( p^{n}-1\right) \rho
-\lambda ^{\ast })$. Next we find a good filtration of this last tensor
product and try to decompose it as a direct sum of indecomposables, one of
which of course is $Q_{n}\left( \lambda \right) $. 

Then again $Q_{n}\left( \lambda \right) $, as it is well known, is also $%
G\left( n\right) -$injective, therefore the $G\left( n\right) -$injective
hull $U_{n}\left( \lambda \right) $\ of $L(\lambda )$\ is a $G\left(
n\right) -$direct summand of $Q_{n}\left( \lambda \right) $. 

In the procedure of restriction we make extensive use of Steinberg's tensor
product theorem; then we shall have to calculate lots of tensor products of
simple $G-$modules, which may either be done ad hoc, for example by weight
considerations or by use of known results on the translation functors or,
fortunately, in the case of $SL_{3}$, by using the algorithms found in \cite%
{BDM}.\ 

Notably for $SL_{3}$\ again, in the mentioned paper \cite{HHA} both the
dimensions of the modules $Ext_{G\left( 1\right) }^{1}$\ and the $G\left(
1\right) -$decomposition of the $Q_{1}\left( \lambda \right) $'s\ have been
explicitly calculated. Furthermore and very importantly, in \cite{DS} and
much more thoroughly in \cite{KKH} we find an explicit calculation of the
submodule-structure of the Weyl modules (and their duals); notice that the
diagrams given there are in reality virtual, since there are nowhere in them
isomorphic irreducibles on the same layer. This convenience does anyway no
longer hold after the mentioned restrictions.

\textit{The last Proposition above shall then be of crucial importance to
articulate the way the virtual contents of the old break up and organize
into the new and far more complicated diagrams, after the restrictions.}

\bigskip\ 

\bigskip I have been working with such examples already as a young student
in the late '80's; my work was then interrupted, due to outer conditions and
personal obligations. After resuming my mathematical activity recently, I
have found in \cite{BC} the only diagrammatic method that I know of, that
somehow reminds of my own approach. There is, however, a fundamental
difference from mine in theirs: Contrary to what I am doing, they start with
defining abstract diagrams, and then they are looking for modules
"representing" them, if any. Nonetheless that article is important, also
from my point of view, especially whenever such a diagram as theirs has a
unique representation (\cite[5.1]{BC}).

\bigskip 

\bigskip

\end{document}